\numberwithin{equation}{section}
\newtheorem{proposition}[equation]{Proposition}
\newtheorem{corollary}[equation]{Corollary}
\newtheorem{lemma}[equation]{Lemma}
\newtheorem{theorem}[equation]{Theorem}
\newtheorem*{theorem*}{Theorem}
\newtheorem*{corollary*}{Corollary}
\newtheorem*{proposition*}{Proposition}
\newtheorem*{lemma*}{Lemma}
\theoremstyle{definition}
\newtheorem{definition}[equation]{Definition}
\newtheorem*{definition*}{Definition}
\newtheorem*{construction*}{Construction}
\theoremstyle{remark}
\newtheorem{remark}[equation]{Remark}
\newcommand{\id}{\operatorname{id}}
\newcommand{\Z}{\mathbb{Z}}
\newcommand{\NN}{\mathrm{N}}
\newcommand{\Q}{\mathbb{Q}}
\newcommand{\F}{\mathbb{F}}
\let\scr=\mathcal
\let\bb=\mathbb
\newcommand{\Gm}{{\mathbb{G}_m}}
\def\A{\bb A}
\def\P{\bb P}
\def\R{\bb R}
\newcommand{\1}{\mathbbm{1}}
\newcommand{\veff}{{\text{veff}}}
\newcommand{\SH}{\mathcal{SH}}
\def\ph{\mathord-}
\newcommand{\lra}[1]{\langle #1 \rangle}
\DeclareMathOperator*{\colim}{colim}
\let\lim=\relax
\DeclareMathOperator*{\lim}{lim}
\def\Map{\mathrm{Map}}
\def\Spec{\mathrm{Spec}}
\def\NAlg{\mathrm{NAlg}}
\def\Tor{\mathrm{Tor}}
\def\Sect{\mathrm{Sect}}
\def\PSh{\mathcal{P}}
\def\Span{\mathrm{Span}}
\def\Spc{\mathcal{S}\mathrm{pc}{}}
\def\cof{\mathrm{cof}}
\def\cofib{\mathrm{cofib}}
\newcommand{\wequi}{\simeq}
\newcommand{\Mod}{\mathcal{M}\mathrm{od}}
\def\adj{\rightleftarrows}
\DeclareRobustCommand{\ul}{\underline}
\newcommand{\tr}{\mathrm{tr}}
\def\op{\mathrm{op}}
\let\cat=\mathrm
\def\Sm{{\cat{S}\mathrm{m}}}
\def\Aff{{\cat{A}\mathrm{ff}}}
\def\SmAff{{\cat{S}\mathrm{mAff}}}
\def\Sch{\cat{S}\mathrm{ch}{}}
\def\FEt{\mathrm{FEt}{}}
\def\fet{\mathrm{f\acute et}}
\def\all{\mathrm{all}}
\def\GL{\mathrm{GL}}
\def\Sq{\mathrm{Sq}}
\def\mot{\mathrm{mot}}
\def\H{\mathrm{H}}
\def\NSym{\mathrm{NSym}}
\newcommand{\sslash}{\smash{\mathbin{/\mkern-6mu/}}}
\newcommand{\fpsr}[1]{[\![ #1 ]\!]}
\newcommand{\NB}[1]{}
\newcommand{\tombubble}[1]{}
\newcommand{\tom}[1]{}
\newcommand{\elden}[1]{}
\newcommand{\eldenbubble}[1]{}
\newcommand{\jeremiah}[1]{}
\newcommand{\sjeremiah}[1]{}
\renewcommand{\todo}[1]{}
\newcommand{\NB}[1]{\todo[color=gray!40]{#1}}
\newcommand{\tombubble}[1]{\todo[color=green!40]{#1}}
\newcommand{\tom}[1]{{\color{green!60!black}#1}}
\newcommand{\elden}[1]{{\color{blue!60!black}#1}}
\newcommand{\eldenbubble}[1]{\todo[color=blue!40]{#1}}
\newcommand{\jeremiah}[1]{{\color{red!60!black}#1}}
\newcommand{\sjeremiah}[1]{{\todo[color=red!40]{#1}}}
\author{Tom Bachmann}
\address{Department of Mathematics, University of Oslo, Oslo, Norway}
\email{tom.bachmann@zoho.com}
\author{Elden Elmanto}
\address{Department of Mathematics, Harvard University, Cambridge, MA, USA}
\email{eldenelmanto@gmail.com}
\author{Jeremiah Heller}
\address{Department of Mathematics, University of Illinois, Urbana-Champaign, IL, USA}
\email{jbheller@illinois.edu}
\date{\today}
\newenvironment{subappendices}{\appendix}{}
\newcommand{\myexternaldocument}[2][] {{
\let\nl\newlabel
\def\nlxx##1##2##3##4##5##6{\nl{##1}{{#1##2}{}{}{}{}}}
\renewcommand\newlabel[2]{\IfBeginWith{##1}{tocindent}{}{\nlxx{##1}##2}}
\externaldocument{#2}
}}
\title{Splitting results for normed motivic spectra}
\begin{document}
\maketitle

\begin{abstract}
We prove that the universal normed motivic spectrum of characteristic $2$ over a scheme on which $2$ is a unit, splits into a sum of motivic Eilenberg--MacLane spectra.
\end{abstract}
\setcounter{tocdepth}{1}
\tableofcontents

\section{Introduction}
A classical result of Steinberger \cite[III.4.1]{hinfty}, implies that 
the universal $\mathcal{E}_{\infty}$-algebra of characteristic $2$ is a wedge of shifts of the topological Eilenberg-MacLane spectrum $\H\Z/2^{\rm top}$. We prove the following generalization, about $\1 {\sslash} 2$, the universal normed motivic spectrum of characteristic $2$ (see Definition~\ref{def:char}). 
\begin{theorem}[see Theorem \ref{thm:1-ss-2-splits}]
	Let $S$ be a scheme with $1/2\in \mathcal{O}(S)$. Then  
	$\1 \sslash 2$ 
	splits into a wedge of shifts and twists of $\H\Z/2$.
\end{theorem}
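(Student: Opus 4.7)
The plan is to adapt Steinberger's topological argument \cite[III.4.1]{hinfty} to the motivic setting, exploiting the normed structure on $\1 \sslash 2$ to produce motivic analogues of the Dyer--Lashof operations, which in turn generate a basis of splitting maps.

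First I would reduce to a universal base: since both sides are compatible with base change in the appropriate sense, it suffices to prove the splitting over $\Spec \Z[1/2]$. Second, I would produce a canonical ring map $\phi \colon \1 \sslash 2 \to \H\Z/2$. The motivic Eilenberg--MacLane spectrum $\H\Z/2$ carries a normed structure (by Bachmann--Hoyois) and is of characteristic $2$, so the universal property that defines $\1 \sslash 2$ produces $\phi$ uniquely as a map of normed spectra.

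The heart of the argument is to identify $\H\Z/2 \otimes (\1 \sslash 2)$ explicitly. The normed structure endows its bigraded homotopy with motivic power operations, and evaluating these operations on the canonical generator should produce a family of classes exhibiting $\H\Z/2 \otimes (\1 \sslash 2)$ as a free $\H\Z/2$-module of the form $\bigvee_\alpha \Sigma^{p_\alpha, q_\alpha} \H\Z/2$. To descend this free-module decomposition to a splitting of $\1 \sslash 2$ itself, I would lift each basis class to a map $\Sigma^{p_\alpha, q_\alpha} \1 \to \1 \sslash 2$ constructed directly from the power operation on $\1 \sslash 2$ (rather than merely from its image in $\H\Z/2$-homology), so that the assembled lift is already a map into $\1 \sslash 2$. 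Its composition with $\phi \otimes \id$ recovers the free-module equivalence and is therefore itself an equivalence after smashing with $\H\Z/2$; one then concludes by a cellular/$\H\Z/2$-module detection argument using that $\1 \sslash 2$ is, by construction, a module over $\H\Z/2$.

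The main obstacle is the explicit computation of the relevant extended power construction and the verification that the motivic power operations applied to the generator genuinely freely generate the target, with the correct bigraded shifts. In the bigraded motivic context, the twists $q_\alpha$ encode subtle arithmetic that has no classical counterpart, and a filtration or degreewise finiteness argument (e.g.\ via the slice filtration or connectivity of the normed construction) is also needed to ensure that the assembled splitting converges to an honest equivalence rather than to a completion of $\1 \sslash 2$.
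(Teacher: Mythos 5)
Your overall architecture --- reduce to $\Spec(\Z[1/2])$, use the universal property to get $\phi\colon \1\sslash 2 \to \H\Z/2$, compute $\H\Z/2\wedge(\1\sslash 2) \wequi \H\Z/2\wedge\NSym(S^1)$ and bring power operations to bear --- matches the paper up to the point where you must descend from an $\H\Z/2$-homology statement to an equivalence of spectra, and that is where the proposal breaks. The claim that ``$\1\sslash 2$ is, by construction, a module over $\H\Z/2$'' is false and circular: by construction $\H\Z/2$ receives a ring map \emph{from} $\1\sslash 2$, and $\1\sslash 2$ being an $\H\Z/2$-module is essentially the content of the theorem being proved. Relatedly, your proposed splitting maps $\Sigma^{p_\alpha,q_\alpha}\1\to\1\sslash 2$ go out of spheres, not between $\1\sslash 2$ and a wedge of Eilenberg--MacLane spectra; extending them to $\Sigma^{p_\alpha,q_\alpha}\H\Z/2\to\1\sslash 2$ would again require the nonexistent module structure. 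The paper's Theorem \ref{thm:detect-splitting} builds the comparison map in the opposite direction, $E \to E\wedge\H\Z/2 \to W$ with $W$ a wedge of shifts and twists of $\H\Z/2$: the map $E\wedge\H\Z/2\to W$ is produced purely algebraically by a generalized Milnor--Moore theorem for Hopf algebroids (Corollary \ref{corr:milnor-moore}) and lifted to spectra using that $E\wedge\H\Z/2$ is a summand of a split Tate motive. The genuinely nontrivial input you are missing is then Lemma \ref{lemm:conservative}: $-\wedge\H\Z/2$ is conservative on connective, slice-connective, $2$-power-torsion spectra (proved by reduction to residue fields and Bachmann's motivic Hurewicz theorem). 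Without this, an $\H\Z/2_{**}$-isomorphism does not yield an equivalence; your own worry about ``converging to a completion'' is exactly this issue, and it is not resolved by a cellularity argument.

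A second discrepancy: in the paper the power operations are not used to construct the splitting maps at all. They enter only to prove that $\H\Z/2_{**}(\1\sslash 2)\to\H\Z/2_{**}\H\Z/2$ is \emph{surjective} (Lemma \ref{lemm:1-ss-2-surjection}): the image contains $\tau_0$ --- which requires factoring the unit through the cofiber of $2_\epsilon$ rather than of $2$, since the analogous statement for $\1/2$ fails --- and is a subring closed under power operations, hence is everything by the Steinberger-type Theorem \ref{thm:bockstein-generates}. That surjectivity is the hypothesis making Milnor--Moore applicable; cofreeness of the comodule then produces the abstract basis, with the slope-$q$ vanishing line of Proposition \ref{prop:free-normed-spectra} supplying the connectivity needed for the graded Nakayama step. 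If you want to salvage your approach, you should (i) orient the splitting map from $\1\sslash 2$ into the wedge of Eilenberg--MacLane spectra, and (ii) supply a conservativity or Whitehead-type statement replacing the false module claim.
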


The proof of this result relies critically on computational input coming from power operations in normed motivic spectra, constructed  in \cite{colimits} and \cite{operations}.

\subsection*{Outline of the paper}
We begin in \S\ref{sec:MM} by establishing a generalized version of the Milnor-Moore theorem and apply it to obtain a proto-splitting result,  which forms the basis for our main splitting result. In \S\ref{sec:motive_of_free} we compute the motive of the free normed spectrum on various spheres. Finally in \S\ref{sec:splitting_of_1ss2}, using power operations on $\H\Z/2_{**}(\1\sslash 2)$, we deduce our main splitting result. 

There are two appendices. In \S\ref{sec:A1topology}, a couple of useful facts concerning elements of $[\1,\1]_{\SH(S)}$ are recorded and \S\ref{sec:basechange} concerns base change functors and normed spectra, in particular that the formation of free normed spectra commutes with base change.

\subsection*{Acknowledgements}
We are grateful to Jeremy Hahn for introducing us to the elegant approach to splitting problems using the Milnor--Moore theorem.

Heller was partially supported by the NSF under award DMS-1710966.
\subsection*{Notation and conventions}
This paper is the third in a series, preceded by \cite{colimits} and \cite{operations}.
References to the previous ones are written  in the form \ref{ex:1} or \ref{thm:steinberger}.
We freely use notation and conventions from the previous papers. In addition, we use the following notation.

We write $\NAlg(\SH(S))$ for the category denoted by $\NAlg_{\Sm_S}(\SH^\otimes)$ in \cite[\S7]{norms}.
More generally, given a normed category $\scr D$ over $S$ in the sense of \cite[\S6.2]{norms}, we write $\NAlg(\scr D(S))$ for the analogous category of normed objects.
In particular if $H \in \NAlg(\SH(S))$ then $\Mod_H$ upgrades to a normed category, and we obtain $\NAlg(\Mod_H) \wequi \NAlg(\SH(S))_{H/}$ \cite[Proposition 7.6(4)]{norms}.
The forgetful functor $\NAlg(\SH(S)) \to \SH(S)$ has a left adjoint, which we denote by $\NSym$.
Similarly the forgetful functor $\NAlg(\Mod_H) \to \Mod_H$ has a left adjoint, denoted $\NSym_H$.

For $a \in \scr O^\times(S)$, we write $\lra{a} \in [\1, \1]_{\SH(S)}$ for the automorphism induced by multiplication by $a$ on $\Gm$.
We also put 
\[
 n_\epsilon = 1 + \lra{-1} + 1 + \dots + \lra{(-1)^{n-1} 1}, 
 \] where the sum consists of $n$ terms.

\section{A generalized Milnor-Moore theorem}\label{sec:MM}

All of the (graded) Hopf algebroids and comodule algebras considered in this paper will be (graded) commutative.
Recall the following basic structural result \cite[Corollary A1.1.18]{ravenel1986complex}.

\begin{theorem}[Milnor-Moore] \label{thm:milnor-moore}
Let $k$ be a field, $A$ a graded, connected Hopf algebra over $k$ and $M$ a graded, connected $A$-comodule-algebra. Suppose that there exists a surjection $M \to A$ of graded comodule algebras. Then $M$ is cofree.

More specifically, let $PM := \{m \in M \mid \psi(m) = m \otimes 1 \}$. If $\alpha: M \to PM$ is any graded $k$-linear splitting of the canonical inclusion $PM \hookrightarrow M$, then $\alpha$ cofreely generates $M$.
\end{theorem}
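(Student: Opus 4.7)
The plan is to show that the canonical map
\[
\phi\colon M \to PM \otimes A, \qquad \phi(m) = (\alpha \otimes \id_A)(\psi(m)),
\]
is an isomorphism of right $A$-comodules, where $PM \otimes A$ carries the cofree coaction $\id_{PM} \otimes \Delta_A$. That $\phi$ is a map of comodules follows immediately from coassociativity $(\psi \otimes \id)\psi = (\id \otimes \Delta)\psi$ combined with $k$-linearity of $\alpha$. The bijectivity of $\phi$ is the substantive content, and it yields both assertions of the theorem at once: $M$ is cofree as an $A$-comodule and $\alpha$ is an explicit cofree generator.

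To prove bijectivity I would induct on total degree, using graded connectedness to settle the base case: $M_0 = A_0 = k$, and $\phi$ restricts to the identity there. The essential role of the surjection $\pi\colon M \twoheadrightarrow A$ enters as follows. Fix a graded $k$-linear section $s\colon A \to M$ of $\pi$. Since $\pi$ is a map of comodule algebras, one computes that $\psi(s(a)) = s(a) \otimes 1 + \sum_j x_j \otimes b_j$ with $\{b_j\}$ a basis of $\bar A := \ker\varepsilon$ and $\deg x_j < \deg a$. Combined with $\psi(p) = p \otimes 1$ for $p \in PM$, this shows that $\phi(p\cdot s(a))$ agrees with $p \otimes a$ modulo terms whose $A$-tensor factor has strictly smaller degree, so induction gives surjectivity. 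For injectivity, take a nonzero $m \in \ker\phi$ of minimal total degree and expand $\psi(m) = m \otimes 1 + \sum_j m_j \otimes a_j$ along a homogeneous basis of $\bar A$, so that $\deg m_j < \deg m$. The vanishing $\phi(m) = 0$ forces $\alpha(m) = 0$ and $\alpha(m_j) = 0$ for every $j$; coassociativity combined with the inductive control on $\phi$ below degree $\deg m$ then propagates these vanishings to $m_j = 0$, hence $m \in PM$ and $m = \alpha(m) = 0$, a contradiction.

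The main obstacle is the bookkeeping in the injectivity step: propagating $\alpha(m_j) = 0$ to $m_j \in \ker\phi$ (and thus $m_j = 0$ by minimality) requires iterating coassociativity and chaining the inductive hypothesis in a non-circular order. A cleaner alternative would be to introduce the $\bar A$-weight filtration $F_0 M = PM$, $F_n M = \{\, m \in M : (\psi - (\ph \otimes 1))(m) \in F_{n-1} M \otimes \bar A\,\}$, with respect to which $\phi$ manifestly preserves filtrations and induces an isomorphism on associated graded; the bijectivity of $\phi$ then reduces to checking $\bigcup_n F_n M = M$, which follows from graded connectedness together with the surjectivity of $\pi$. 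I would pursue this filtered approach first before falling back on the direct inductive argument, and would cross-check against the proof of Corollary A1.1.18 in Ravenel's book.
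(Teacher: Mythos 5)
Your argument is correct and is essentially the standard proof, which the paper does not reproduce but simply cites: the statement is Ravenel's Corollary A1.1.18, and the paper's entire proof is a pointer to the first paragraph of the proof of Theorem A1.1.17 there, where exactly your map $\phi=(\alpha\otimes\id)\circ\psi$ and the induction using a section of $M\twoheadrightarrow A$ appear (the surjection being needed only for surjectivity of $\phi$, as in your sketch). The injectivity step you flag as the main obstacle is in fact a one-pass computation rather than an iteration: applying $\alpha\otimes\id\otimes\id$ to the coassociativity identity for a minimal-degree $m\in\ker\phi$ gives $\sum_j\phi(m_j)\otimes a_j=(\id\otimes\Delta)\phi(m)=0$, so linear independence of the $a_j$ forces $\phi(m_j)=0$, minimality gives $m_j=0$, hence $m\in PM$ and $m=\alpha(m)=0$.
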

In other words there exists a morphism of $k$-vector spaces $M \to C$ (where $C \wequi PM$) such that the composite $M \to M \otimes_k A \to C \otimes_k A$ is an isomorphism (of vector spaces and hence comodule algebras).
\begin{proof}
The ``more specifically'' part is the content of the first paragraph of the proof of \cite[A1.1.17]{ravenel1986complex}.
\end{proof}

We wish to generalize this result to the situation where $k$ need not be a field, but rather is a (bigraded) ring. It is then natural to consider Hopf algebroids instead of Hopf algebras. There is a generalization of the above theorem for Hopf algebroids \cite[Theorem A1.1.17]{ravenel1986complex}, but this is not quite what we want.

To explain our result, we first need a small digression on scalar extension of Hopf algebroids. Suppose that $(R, \Gamma)$ is a Hopf algebroid and $R \to R'$ is a homomorphism of commutative rings. We would like to say that $(R', R' \otimes_R \Gamma)$ is a Hopf algebroid, but this does not make sense. Indeed $\Gamma$ is an $R$-bimodule, so on which side are we even tensoring? 

Considering the stack $\scr M$ represented by $(R, \Gamma)$, we note that we are attempting to form a new stack affine over $\scr M$. This corresponds to a quasi-coherent sheaf of commutative algebras on $\scr M$, or equivalently a comodule algebra under $R$. Thus we see \eldenbubble{I don't see how the stack point of view explains the conditions above; don't we just use the left $R$-module on $\Gamma$ so the only thing to construct is the right unit? \tom{It explains that we need to make $R'$ into a comodule algebra under $(R, \Gamma)$, i.e. why we need a new right unit. Of course it is also algebraically obvious that you need this extra structure. Perhaps the more importantly, the stacky point of view nicely explains that in the case of a closed immersion, there is at most one compatible right unit, and when it exists.}} that in order to extend the scalars, we need to to also provide a morphism $\eta_r': R' \to R' \otimes_R \Gamma$ (the new right unit), satisfying certain compatibilities. If $R \to R'$ is surjective with kernel $I$, then there is at most one such morphism $\eta_r'$, which exists if and only if $I\Gamma = \Gamma I$ (in other words, a closed subset of $\Spec(R)$ descends to a closed subset of $\scr M$ if and only if the two corresponding closed subsets in $\Spec(\Gamma)$ are equal).

\begin{corollary} \label{corr:milnor-moore}
Let $k$ be a field, $R$ a graded-commutative $k$-algebra, $R_+ \subset R$ a graded ideal such that $R/R_+  \wequi k$. Let $(R, \Gamma)$ be a Hopf algebroid and $M$ a graded comodule algebra under $(R, \Gamma)$. Suppose that
\begin{enumerate}
\item $R_+ \Gamma = \Gamma R_+$,
\item there is a surjection $\phi: M \to \Gamma$ of graded comodule algebras,
\item $M, \Gamma$ are projective as $R$-modules, and
\item $\Gamma/R_+\Gamma$ and $M/R_+M$ are connected.
\end{enumerate}
Then the following hold.
\begin{enumerate}[(a)]
\item There exists a free, graded, connected $R$-module $W$ and an $R$-module map $M \to W$ such that the induced map 
\[
 M/R_+ M \to M/R_+ M \otimes_{k} \Gamma/R_+ \Gamma \to W/R_+W \otimes_{k} \Gamma/R_+ \Gamma 
\] 
is an isomorphism.
\item If $R_+$ is the augmentation ideal and $\Gamma, M$ are bounded below, then for any $M \to W$ as in (a), the induced map $M \to W \otimes_{R} \Gamma$ is an isomorphism.
\end{enumerate}
In particular, under these assumptions, $M$ is cofree.
\end{corollary}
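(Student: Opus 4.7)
The plan is to reduce to the classical Milnor--Moore theorem (Theorem~\ref{thm:milnor-moore}) by base changing along the quotient $R \to R/R_+ \simeq k$, then to lift the resulting splitting using projectivity of $M$, and finally to deduce the isomorphism in (b) from a graded Nakayama argument.

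First, using condition (1) together with the scalar-extension discussion preceding the statement, the base change $\bar\Gamma := \Gamma/R_+\Gamma$ is an honest connected Hopf algebra over $k$, and $\bar M := M/R_+M$ inherits the structure of a graded connected comodule algebra over $\bar\Gamma$. Condition (2) furnishes a surjection $\bar M \to \bar\Gamma$, so Theorem~\ref{thm:milnor-moore} applies and yields a $k$-linear splitting $\bar\alpha: \bar M \to P\bar M =: \bar C$ of the inclusion of primitives, with the additional property that the composite
\[
\bar M \xrightarrow{\bar\psi} \bar M \otimes_k \bar\Gamma \xrightarrow{\bar\alpha \otimes 1} \bar C \otimes_k \bar\Gamma
\]
is an isomorphism.

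Next, choose a graded $k$-basis of $\bar C$ and let $W$ be the corresponding free graded $R$-module (which is connected since $\bar C$ is), so that $W/R_+W \simeq \bar C$ canonically. To prove (a), lift $\bar\alpha$ coordinate-by-coordinate to an $R$-linear map $\alpha: M \to W$: each coordinate of $\bar\alpha$ is a $k$-linear map $\bar M \to k$, equivalently an $R$-linear map $M \to R/R_+$, which lifts to an $R$-linear map $M \to R$ because $M$ is projective by (3) and hence $\Hom_R(M,-)$ preserves the surjection $R \to R/R_+$. By construction $\alpha$ reduces mod $R_+$ to $\bar\alpha$, so the map appearing in part (a) agrees with the isomorphism displayed above.

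For (b), consider the $R$-linear composite $f: M \xrightarrow{\psi} M \otimes_R \Gamma \xrightarrow{\alpha \otimes 1} W \otimes_R \Gamma$. Using condition (1) once more to identify $(W \otimes_R \Gamma) \otimes_R k \simeq W/R_+W \otimes_k \bar\Gamma$, the reduction $f \otimes_R k$ is exactly the isomorphism from (a). Since $\Gamma$ (and hence $W \otimes_R \Gamma$, being a direct sum of copies of $\Gamma$) is projective over $R$, the short exact sequences associated to $f$ remain exact after tensoring with $k$; combined with $f \otimes_R k$ being an isomorphism, this gives $\ker f \otimes_R k = 0 = \operatorname{coker} f \otimes_R k$. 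Since $R_+$ is the augmentation ideal and both modules are bounded below, graded Nakayama forces them to vanish, proving (b). The subtlest point is the observation that although the lift $\alpha$ is not a map of comodule algebras, this is not required: part (b) asserts only an isomorphism of $R$-modules, and the argument above only uses compatibility mod $R_+$.
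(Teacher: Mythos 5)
Your proof is correct and follows essentially the same route as the paper: reduce mod $R_+$ (using (1) and (4)) to put yourself in the setting of the classical Milnor--Moore theorem, lift the resulting splitting to an $R$-linear map $M \to W$ via projectivity of $M$, and then deduce (b) from surjectivity by graded Nakayama together with the vanishing of $\Tor_1^R(W \otimes_R \Gamma, k)$ coming from projectivity of $W \otimes_R \Gamma$. The only point to tighten is the ``coordinate-by-coordinate'' lifting in (a): when $\bar C$ is infinite-dimensional in some degree, the collection of individually lifted coordinates $M \to R$ a priori assembles only into a map to the \emph{product} $\prod R$, not to $W = \bigoplus R$; the clean fix (and what the paper does) is to apply projectivity of $M$ once, to the single surjection $W \to W/R_+W \simeq \bar C$, obtaining the lift $M \to W$ directly.
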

Note that the antipode $c: \Gamma \xrightarrow{\wequi} \Gamma$ is an isomorphism between the right and left $R$-module structures on $\Gamma$, so in (3) it does not matter if we view $\Gamma$ as a left or right $R$-module\NB{really?}.
\begin{proof}
For a left $R$-module $N$, we put $\bar{N} := N/R_+N = \bar{R} \otimes_R N$.

(a) It follows from (1),(4) that $(k, \bar{\Gamma})$ is a graded, connected Hopf algebroid, and hence Hopf algebra. Moreover $\bar{M}$ is a graded connected $\bar{\Gamma}$-comodule algebra, and $\bar{\phi}$ is a surjection of comodule algebras. Hence we may apply Theorem \ref{thm:milnor-moore} to find a surjection of graded $k$-vector spaces $\bar{M} \to V$ such that $\bar{M} \to \bar{M} \otimes_k \bar{\Gamma} \to V \otimes_k \bar{\Gamma}$ is an isomorphism. We claim that there exists a connected free graded $R$-module $W$ together with an isomorphism $\bar{W} \wequi V$ and a commutative diagram as follows
\begin{equation}\label{eq:milnor-moore-diagram}
\begin{CD}
M @>>> W \\
@VVV        @VVV \\
\bar{M} @>>> V.
\end{CD}
\end{equation}
Indeed, we can let $W:= R \otimes_k V$ so that we have a canonical surjection $W \rightarrow V \cong R/R_+ \otimes_R W$. Since $M$ is projective, the vertical arrow in~\eqref{eq:milnor-moore-diagram} exists.

(b) We show that $\psi: M \to M \otimes_R \Gamma \to W \otimes_R \Gamma$ is an isomorphism of graded $R$-modules. Since $\bar{\psi}$ is an isomorphism by assumption, the graded Nakayama lemma \cite[Exercise 4.6]{eisenbud2013commutative} implies that $\psi$ is surjective. Form the exact sequence 
\[
0 \to K \to M \to W \otimes_R \Gamma \to 0.
\] 
We get an induced exact sequence 
\[
\Tor_1^R(W \otimes_R \Gamma, \bar{R}) \to \bar{K} \to \bar{M} \to \overline{W \otimes_R \Gamma}.
\] Since $W$ and $\Gamma$ are projective so is $W \otimes_R \Gamma$, so the Tor vanishes and $\bar{K} = 0$. It follows that $K=0$, by the graded Nakayama lemma again. This concludes the proof.
\end{proof}

\subsection{Application to splitting problems}
In order to apply the previous result effectively, we will need to produce \emph{non-standard} gradings.
We produce these by the following maneuver. Suppose given a homomorphism $\alpha: \Z \times \Z \to \Z$. Then given a bigraded group $A_{**}$, we put $A_{\star = i} = \bigoplus_{\alpha(m,n) = i} A_{m,n}.$ We have $\alpha((1,0)) = a$ and $\alpha((0,1)) = -b$ for some $a,b \in \Z$, which we suppose are not both zero. The line $L = \{am-bn = 0\} \subset \Z \times \Z$ is sent to zero by $\alpha$. Suppose further that $a > 0$. Then $L$ has a canonical orientation and cuts $\Z \times \Z$ into two half planes; $\alpha$ sends the right half plane to $\Z_{> 0}$ and the left half plane to $\Z_{<0}$. In particular $A_{**}$ is connective in the $\star$-grading if and only if all non-zero groups are contained in the right half plane of some translate of $L$, and $A_{**}$ is connected if and only if all non-zero groups are contained in the right half plane of $L$, with the exception of $A_{0,0} = k$. Note also that up to rescaling (which does not change any of the connectivities), the information in $\alpha$ is the same as the information in $L$, and any line through the origin with rational slope can occur. More precisely, if we define an equivalence relation on the set of all non-zero homomorphisms $\alpha:  \Z \times \Z \to \Z$ by declaring that $\alpha \simeq \alpha'$ if and only if 
\[
\frac{\alpha((1,0))}{\alpha((0,1))} = \frac{\alpha'((1,0))}{\alpha'((0,1))};
\]
then there is a bijection between all non-zero homomorphisms $\{\alpha:  \Z \times \Z \to \Z \}$ modulo this equivalence relation with lines $\{ L \subset  \Z \times \Z\}$ through the origin 

\begin{definition}
Let $L$ be an oriented line of rational slope through the origin $\R^2$. We call any of the gradings constructed above a $\star$-grading corresponding to $L$, and we say that $A_{**}$ is connected (respectively connective) with respect to $L$ if $A_\star$ is connected (respectively connective). We also say that $A_{**}$ has a vanishing line $L$ if $A_\star = 0$ for $\star \le 0$.
\end{definition}

As noted above, all $\star$-gradings corresponding to $L$ differ only by rescaling, and in particular being connected (or connective) is independent of the choice of $\star$-grading corresponding to $L$.

\begin{theorem} \label{thm:detect-splitting}
Let $S$ be connected and essentially smooth over a Dedekind domain, and assume that all of its residue fields are perfect and of finite virtual 2-étale cohomological dimension. Let $E \in \SH(S)$ be a homotopy commutative ring spectrum with $2=0$. Suppose that the following hold
\begin{enumerate}
\item $E$ is connective and slice-connective (that is, $E \in \SH(S)^\veff(n)$ for some $n \in \Z$)
\item $E \wedge \H\Z/2$ is a summand of a split Tate motive, and $\ul{\pi}_{**}(E \wedge \H\Z/2)$ is connected with respect to a line of slope $0 < q < 1$ through the origin
\item there is a ring map $E \to \H\Z/2$ such that the induced map $\ul{\pi}_{**}(\H\Z/2 \wedge E) \to \ul{\pi}_{**}(\H\Z/2 \wedge \H\Z/2)$ is surjective.
\end{enumerate}
Then $E$ is equivalent to a wedge of shifts and twists of $\H\Z/2$.
\end{theorem}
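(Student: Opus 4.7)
The strategy is to apply the generalized Milnor--Moore theorem (Corollary~\ref{corr:milnor-moore}) to the Hopf algebroid $(R, \Gamma) := (\H\Z/2_{**}, \H\Z/2_{**}\H\Z/2)$ with comodule algebra $M := \H\Z/2_{**}(E) = \ul{\pi}_{**}(\H\Z/2 \wedge E)$. The $\H\Z/2$-coaction makes $M$ a $\Gamma$-comodule, the ring structure on $E$ makes it a comodule algebra, and the ring map $E \to \H\Z/2$ of hypothesis (3) produces a surjection $\phi \colon M \twoheadrightarrow \Gamma$ of comodule algebras. The corollary will deliver an algebraic splitting, which I then plan to realize topologically.

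\textbf{Step 1 (apply Corollary~\ref{corr:milnor-moore}).} Choose a $\star$-grading corresponding to the oriented line $L$ of slope $q \in (0,1)$ supplied by (2), and let $R_+ \subset R$ be the ideal of strictly positive $\star$-degree. Under the residue-field and cohomological-dimension assumptions on $S$, the dual motivic Steenrod algebra $\Gamma$ is a free $R$-module (Hopkins--Morel / Voevodsky), and $M$ is $R$-projective because (2) makes it a summand of a split Tate module. Assumption (2) also gives connectedness of $M/R_+M$; the explicit bidegrees of the generators of $\Gamma$ (all on one side of $L$ for $q \in (0,1)$) yield connectedness of $\Gamma/R_+\Gamma$ and the commutation $R_+\Gamma = \Gamma R_+$. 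Corollary~\ref{corr:milnor-moore} then produces a free graded $R$-module $W = \bigoplus_i R\{w_i\}$, with $\deg w_i = (a_i, b_i)$, and a comodule-algebra isomorphism $M \cong W \otimes_R \Gamma$.

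\textbf{Step 2 (geometric realization and descent).} By assumption (2), $\H\Z/2 \wedge E$ is a wedge of shifts and twists of $\H\Z/2$ as an $\H\Z/2$-module, and the basis $\{w_i\}$ assembles into an $\H\Z/2$-module equivalence $F := \bigvee_i \Sigma^{a_i, b_i} \H\Z/2 \xrightarrow{\simeq} \H\Z/2 \wedge E$. The remaining task is to upgrade this to an equivalence $F \xrightarrow{\simeq} E$ in $\SH(S)$ itself. I would construct, one generator at a time, motivic maps $\Sigma^{a_i, b_i} \H\Z/2 \to E$ whose image under the unit $E \to \H\Z/2 \wedge E$ recovers $w_i$, and then verify that the assembled map $F \to E$ is an equivalence. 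Existence of the lifts and the equivalence statement should both follow from convergence of the $\H\Z/2$-based Adams spectral sequence (or its slice-filtration analog) for $E$: hypothesis (1) supplies the necessary connectivity and slice-connectivity of $E$, the finite virtual $2$-\'etale cohomological dimension of residue fields of $S$ forces convergence to $\ul{\pi}_{**}(E)$, and the splitting $M \cong W \otimes_R \Gamma$ from Step~1 makes the $E_2$-page of the spectral sequence collapse on the $0$-line.

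\textbf{Main obstacle.} The algebraic part (Step~1) is mostly a matter of feeding the hypotheses into Corollary~\ref{corr:milnor-moore}, and the $\H\Z/2$-module splitting of $\H\Z/2 \wedge E$ comes essentially for free from (2). The crux is the descent in Step~2: passing from a splitting of $\H\Z/2 \wedge E$ to one of $E$ itself requires the full strength of hypothesis (1) together with the cohomological-dimension assumption on $S$, since otherwise the relevant Adams-type resolution converges only to the $\H\Z/2$-nilpotent completion of $E$, which need not agree with $E$.
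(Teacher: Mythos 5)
Your Step 1 is essentially the paper's argument: one applies Corollary \ref{corr:milnor-moore} to $(R,\Gamma)=(\H\Z/2_{**},\H\Z/2_{**}\H\Z/2)$ and $M=\H\Z/2_{**}E$ in a $\star$-grading attached to the line of slope $q$. Be aware, though, that the hypothesis $R_+\Gamma=\Gamma R_+$ is not a matter of ``explicit bidegrees of the generators'': it is a statement about the two-sided $R$-module structure on $\Gamma$ (equivalently, about commuting Steenrod operations past elements of $R_+$), and the paper devotes Lemma \ref{lemm:steenrod-R+} to it, using the commutation formulas for $\beta$ and $\Sq^{2n}$ with scalars and then dualizing. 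Also, connectedness of $M/R_+M$ and $\Gamma/R_+\Gamma$ is a statement about sheaves $\ul\pi_{**}$ and has to be reduced to henselian local base schemes; the paper does this via base-change invariance of $(\H\Z/2_\star E)/R_+$.

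The genuine gap is in your Step 2: your comparison map points the wrong way. You propose to construct maps $\Sigma^{a_i,b_i}\H\Z/2\to E$ lifting the generators $w_i$. Since $E$ is only a homotopy commutative ring spectrum with $2=0$ and not an $\H\Z/2$-module, there is no a priori reason such maps exist --- maps \emph{out of} $\H\Z/2$ into a general spectrum are heavily obstructed (classically $[\H\Z/2,\1]=0$, for instance), and producing them is essentially equivalent to the splitting you are trying to prove. The Adams spectral sequence you invoke has a non-compact, non-finite source, so even granting the collapse of $E_2$ (which does follow from $\H\Z/2_{**}E$ being an extended comodule), its convergence to $[\H\Z/2,E]$ rather than to maps into some completion of $E$ is precisely the hard point, and it is not supplied by the stated hypotheses. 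The paper avoids all of this by going the other way: the algebraic projection $M\to W_{**}$ is realized as a map of $\H\Z/2$-modules $\tilde\alpha:E\wedge\H\Z/2\to W$ (immediate, since both sides are summands of split Tate motives), and one sets $\alpha:E\to E\wedge\H\Z/2\xrightarrow{\tilde\alpha}W$. Then $\alpha\wedge\H\Z/2$ is an isomorphism by Corollary \ref{corr:milnor-moore}(b), and $\alpha$ itself is an equivalence because $-\wedge\H\Z/2$ is conservative on connective, slice-connective, $2$-power-torsion spectra (Lemma \ref{lemm:conservative}); this conservativity statement --- proved by reduction to perfect residue fields and the motivic Hurewicz-type theorem --- is where hypothesis (1) and the assumptions on $S$ actually do their work, and it is strictly weaker (hence easier to establish) than the $\H\Z/2$-completeness your route would require.
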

\begin{proof}
Put $R = \H\Z/2_{**}$ and $R_+ = \H\Z/2_{*,*<0}$. Then $R_+$ is a bigraded ideal and $R/R_+ \wequi \Z/2$, by Proposition \ref{prop:HZ-dedekind}(4). Let $\scr A = \H\Z/2_{**} \H\Z/2$; then $(\H\Z/2_{**}, \scr A)$ is a bi-graded Hopf algebroid. The map $\H\Z/2_{**} E \to \scr A$ induced by (3) is a surjective morphism of comodule algebras. We show in Lemma \ref{lemm:steenrod-R+} below that $R_+ \scr A^\vee = \scr A^\vee R_+$; here $\scr A^\vee$ denotes the Steenrod algebra. Taking duals\todo{is this really legit?} as explained in \S\ref{subsub:composition-algebra}, we find that $R_+ \scr A = \scr A R_+$.

Increasing $q$ slightly if necessary, we may assume that $q \in \Q$. Now the line from (2) has rational slope; pick a corresponding $\star$-grading. From now on we treat all bigraded objects as $\star$-graded.
By assumption (2), $\H\Z/2_{\star} E$ is projective. Next we also claim that for any $S' \rightarrow S$, the canonical map 
$(\H\Z/2_\star E_{S'})/R_+ \rightarrow (\H\Z/2_\star E)/R'_+$ is an isomorphism, i.e., $(\H\Z/2_{\star} E) /R_+$ is independent of $S$.  Since $R/R_+ = \Z/2$, it is independent of $S$, and thus the claim holds when $E \wedge \H\Z/2$ is a split Tate motive, or a summand thereof. Thus the claim follows from assumption (2). Therefore, we can conclude that $(\H\Z/2_{\star}E)/R_+$ is connected, since this holds after pullback to any henselian local scheme, by (2).
 
Since $\phi$ is surjective, this implies that $\scr A/R_+$ is $\star$-connected as well. By Theorem \ref{thm:HZ-dual-steenrod}(1) we know that $\scr A_{\star}$ is projective.
Applying Corollary \ref{corr:milnor-moore}(a) to $M=\H\Z/2_\star E$ and $\Gamma = \H\Z/2_\star \H\Z/2$, we obtain a split Tate motive $W$ and a map $\alpha': M \to \H\Z/2_{**} W$.
Writing $E \wedge \H\Z/2$ as a sum of Tate motives, we can lift $\alpha'$ to $\tilde\alpha: E \wedge \H\Z/2 \to W$.
Composing with the unit map $E \to E \wedge \H\Z/2$, we obtain $\alpha: E \to W$.
We claim this is an equivalence.
By Lemma \ref{lemm:conservative} below, it suffices to show that $\alpha \wedge \H\Z/2$ is an equivalence.
Since equivalences can be checked on $\ul\pi_{**}$, we may assume that $S$ is henselian local, and it suffices to show $\H\Z/2_{**}(\alpha): \H\Z/2_{**}E \to \H\Z/2_{**} W$ is an equivalence.
By construction, this coincides with the map \[ M = \H\Z/2_\star E \to M \otimes \Gamma = \H\Z/2_\star E \otimes_{\H\Z/2_\star} \H\Z/2_\star \to \H\Z/2_\star W. \]
We prove this is an isomorphism by appeal to Corollary \ref{corr:milnor-moore}(b).
We must show that $R_+$ is the augmentation ideal and $M, \Gamma$ are $\star$-connective.
Since $S$ is henselian local, the first statement follows from $0<q<1$ and Proposition \ref{prop:HZ-dedekind}(2), and the second statement follows from (2, 3).
\end{proof}

\begin{lemma}\label{lemm:steenrod-R+}
Let $\scr A^{**}$ denote the motivic Steenrod algebra over $S$ and $R_+$ the augmentation ideal of $\H\Z/2^{**}$. Then $R_+ \scr A^{**} = \scr A^{**} R_+$.
\end{lemma}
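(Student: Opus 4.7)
The plan is to exploit the motivic Cartan formula to commute elements of $R_+$ past Steenrod squares, and then deduce both inclusions $R_+\scr{A}^{**}\subset\scr{A}^{**}R_+$ and its reverse by induction on the length of admissible monomials. Since $\scr{A}^{**}$ is generated as a left (equivalently right) $R$-module by admissible monomials in the Steenrod squares $\mathrm{Sq}^i$, it is enough to verify both $\mathrm{Sq}^i\cdot a\in R_+\scr{A}^{**}$ and $a\cdot\mathrm{Sq}^i\in \scr{A}^{**}R_+$ for every $i\ge 0$ and every $a\in R_+$.

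The key computation is an identity in $\scr{A}^{**}$ of the shape
\[
\mathrm{Sq}^i\cdot a \;=\; \sum_{j=0}^{i}\mathrm{Sq}^{j}(a)\cdot\mathrm{Sq}^{i-j} \;+\; (\text{correction terms involving }\tau\text{ or }\rho),
\]
a direct translation of the motivic Cartan formula into the composition algebra structure on $\scr{A}^{**}$. The decisive observation is that every $R$-coefficient on the right lies in $R_+$ whenever $a$ does: for $j\ge 1$ the class $\mathrm{Sq}^j(a)$ has strictly larger bidegree than $a$, so lies in $R_+$; for $j=0$ it is simply $a$; and any correction terms contain a factor of $\tau$ or $\rho$, themselves elements of the augmentation ideal. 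Thus $\mathrm{Sq}^i\cdot a\in R_+\scr{A}^{**}$. A straightforward induction on the length of an admissible monomial $\mathrm{Sq}^{i_1}\cdots\mathrm{Sq}^{i_n}$---writing $\mathrm{Sq}^{i_2}\cdots\mathrm{Sq}^{i_n}\cdot a=\sum_k b_k\cdot\psi_k$ with $b_k\in R_+$ and reapplying the base case to each $\mathrm{Sq}^{i_1}\cdot b_k$---then upgrades this to $\theta\cdot a\in R_+\scr{A}^{**}$ for all $\theta\in\scr{A}^{**}$, giving $\scr{A}^{**}R_+\subset R_+\scr{A}^{**}$.

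For the reverse inclusion I would solve the displayed identity for $a\cdot\mathrm{Sq}^i$, writing it as the difference of $\mathrm{Sq}^i\cdot a$ and a sum of terms of the form (element of $R_+$)$\cdot\mathrm{Sq}^{i'}$ with $i'<i$. The leading term $\mathrm{Sq}^i\cdot a$ lies in $\scr{A}^{**}R_+$ tautologically, and a double induction---first on the index $i$ (with base case $a\cdot\mathrm{Sq}^0=a\in R_+$), then on the length of the admissible monomial exactly as above---yields $a\cdot\theta\in\scr{A}^{**}R_+$ for every admissible $\theta$ and every $a\in R_+$.

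The main obstacle is locating a sufficiently robust version of the motivic Cartan formula over the given base $S$---and pinning down the precise shape of the $\tau,\rho$-corrections---rather than anything structural. Once a suitable Cartan identity has been cited, the fact that each correction coefficient lands in $R_+$ is immediate from $\tau,\rho\in R_+$, and the inductive bookkeeping is entirely formal.
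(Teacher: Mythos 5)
Your proposal follows essentially the same route as the paper's proof: both reduce to single Steenrod squares acting on admissible monomial generators, invoke the composition-algebra form of the motivic Cartan formula to write $\Sq^i x = \sum_j \Sq^j(x)\Sq^{i-j} + (\tau\text{-corrections})$, observe that every coefficient lands in $R_+$ (giving one inclusion directly), and then rearrange and induct on $i$ for the other inclusion. The only cosmetic difference is that the paper treats $i=1$ (via the Bockstein derivation formula) and even $i=2n$ separately, reflecting the actual shape of the motivic Cartan formula, whereas you state a uniform identity with unspecified correction terms; the inductive bookkeeping is otherwise identical.
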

\begin{proof}
The algebra $\scr A^{**}$ is generated as a left $\H\Z/2^{**}$-module by the admissible monomials $\Sq^I$, by Theorem \ref{thm:HZ-dual-steenrod}(4). It thus suffices to prove that for every $i \ge 1$ and $x \in R_+$ we have $x \Sq^i \in \scr A^{**} R_+$ and $\Sq^i x \in R_+ \scr A^{**}$. By formula \eqref{eq:beta-x} of Example \ref{ex:composition-cartan}, we have $\beta x - x \beta = \beta(x) \in R_+$. This deals with $i = 1$. Formula \eqref{eq:Sq-x} of Example \ref{ex:composition-cartan} directly implies that $\Sq^{2n} x \in R_+ \scr A$. We can rearrange the formula to
\[ x \Sq^{2n} = \sum_{a + b = n, a > 0}\Sq^{2a}(x) \Sq^{2b} + \sum_{a+b=n-1} \tau \Sq^{2a+1}(x) \Sq^{2b+1} + \Sq^{2n} x. \]
From this $x \Sq^{2n} \in \scr A R_+$ follows by induction on $i=2n$.
\end{proof}

\begin{lemma}\label{lemm:conservative}
Let $S$ be a scheme of finite dimension, all of whose residue fields are of characteristic $\ne 2$, and of finite virtual 2-étale cohomological dimension. Let $\scr C(S) \subset \SH(S)$ denote the subcategory of connective, slice-connective spectra such that $2^n \id_E =0$ for some $n$. Then $\wedge \H\Z/2$ is a conservative functor on $\scr C(S)$.
\end{lemma}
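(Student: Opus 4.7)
The plan is to show conservativity of $-\wedge\H\Z/2$ on $\scr C(S)$ in two steps: first reduce it to conservativity of $-\wedge\H\Z$ on the same category, then deduce the latter using the slice filtration plus the fact that slices are $\H\Z$-modules.

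For the first reduction, let $E\in\scr C(S)$ with $E\wedge\H\Z/2 = 0$. The endomorphism $2\cdot\id$ of $E\wedge\H\Z$ is nilpotent (since $2^n\id_E=0$), yet its cofiber $E\wedge\H\Z/2$ vanishes by hypothesis, so this endomorphism is also an equivalence. A nilpotent equivalence forces the source to vanish, so $E\wedge\H\Z=0$. It therefore suffices to prove that $E\wedge\H\Z = 0$ implies $E = 0$ for $E\in\scr C(S)$.

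For this second step, I would invoke the slice filtration. Under the hypotheses on $S$, we have $s_0\1\wequi\H\Z$ (Spitzweck's integral motivic cohomology over Dedekind bases and its generalizations), so every slice $s_qE$ is canonically an $\H\Z$-module. For any $\H\Z$-module $M$, the unit $M\to M\wedge\H\Z$ admits a retraction given by the module structure, so $M$ is a retract of $M\wedge\H\Z$ and vanishing of $M\wedge\H\Z$ forces $M=0$. It therefore remains to check that $s_qE\wedge\H\Z=0$. Since $\H\Z$ is effective, one has a projection-formula identification $s_q(E\wedge\H\Z)\wequi s_qE\wedge\H\Z$, so applying $s_q$ to the hypothesis $E\wedge\H\Z=0$ yields $s_qE\wedge\H\Z=0$, hence $s_qE=0$. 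Under our standing hypotheses (finite Krull dimension, slice-connectivity of $E$, finite 2-étale cohomological dimension of residue fields), the slice filtration converges on $\scr C(S)$, and with every slice vanishing we conclude $E=0$.

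The main obstacle I expect is justifying the projection-formula identification $s_q(E\wedge\H\Z)\wequi s_qE\wedge\H\Z$, which is not entirely formal and rests on the effectiveness of $\H\Z$ together with Pelaez-style compatibility of the slice functors with smash products by effective objects. Should that path prove technical, an alternative is to first reduce to the case where $S=\Spec k$ for a field $k$ (by base change to residue fields, which is jointly conservative on $\scr C(S)$ under the given hypotheses) and then conclude via Levine's slice convergence theorem and Morel's motivic Hurewicz theorem over such fields.
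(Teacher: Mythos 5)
Your first step---reducing conservativity of $-\wedge\H\Z/2$ to conservativity of $-\wedge\H\Z$ on $\scr C(S)$ via the $2$-torsion hypothesis---is correct and is essentially what the paper does (there one notes $E\wedge\H\Z/2^n=0$ and splits it as $E\wedge\H\Z\vee E\wedge\H\Z[1]$ using $2^n\id_E=0$; your ``nilpotent equivalence'' phrasing is an equivalent formulation).

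The second step, however, contains a genuine gap. The identification $s_q(E\wedge\H\Z)\wequi s_qE\wedge\H\Z$ is not merely ``not entirely formal''; it is false. Already for $E=\1$ and $q=0$ the left-hand side is $s_0(\H\Z)\wequi\H\Z$ while the right-hand side is $\H\Z\wedge\H\Z$. Effectiveness of $\H\Z$ only gives that smashing with it preserves the subcategories $\SH(S)^\eff(q)$, hence a natural comparison map, not an equivalence. More seriously, the sketch does not engage with the real difficulty, which is $\eta$-periodicity: $\eta$ acts as zero on $\H\Z$-modules, so any $\eta$-periodic spectrum is killed by $-\wedge\H\Z$ and has vanishing slices, and $\1[\eta^{-1}]$ is a nonzero connective example. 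What excludes such spectra from $\scr C(S)$ is precisely the combination of slice-connectivity, the torsion hypothesis, and the finite virtual $2$-étale cohomological dimension of the residue fields; converting these into convergence of the slice filtration is the substantive content of the field-level conservativity theorem the paper cites (Bachmann's conservativity results), not a routine convergence statement---and over a non-field base neither that convergence nor even $s_0\1\wequi\H\Z$ is available under the stated hypotheses. The viable route is the one you relegate to a fallback, and it is the paper's: reduce to the residue fields of $S$ (pullback to residue fields is jointly conservative since $S$ is finite-dimensional), then to perfect fields by inverting the exponential characteristic (legitimate on $\scr C$ since that characteristic is odd), and only then invoke the known conservativity of $-\wedge\H\Z$ on connective, slice-connective torsion spectra over such fields. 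If you take that route, the perfection step must be included and the field-level theorem must be cited rather than re-derived.
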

\begin{proof}
If $f: S' \to S$ is any morphism, then $f^* \scr C(S) \subset \scr C(S')$. Since pulling back to the residue fields of $S$ is conservative by \cite[Proposition B.3]{norms}, we may assume that $S$ is the spectrum of a field of finite virtual 2-étale cohomological dimension and characteristic $\ne 2$.
Note that the exponential characteristic of $k$ is invertible on objects of $\scr C$ (since it is not $2$), and if $k$ is of positive characteristic, then it is of finite $2$-étale cohomological dimension (because virtual and actual $2$-étale cohomological dimension agree in positive characteristic\todo{ref}).
Thus by \cite[Corollary 2.1.7]{elmanto2018perfection} we may assume $k$ perfect, and (now) we may appeal to \cite[Theorem 16 and Lemma 19]{bachmann-hurewicz}, whence the functor $\wedge \H\Z$ is conservative on $\scr C(k)$.
Now suppose that $E \wedge \H\Z/2 = 0$ and $2^n \id_E = 0$.
Then $0 = E \wedge \H\Z/2^n$ (being obtained as a finite extension of copies of $E \wedge \H\Z/2$), and $E \wedge \H\Z/2^n \wequi \cofib(E \wedge \H\Z \xrightarrow{2^n} E \wedge \H\Z) \wequi E \wedge \H\Z \vee E \wedge \H\Z[1]$.
Thus $E \wedge \H\Z = 0$, in other words, $-\wedge \H\Z/2$ is also conservative on $\scr C(k)$.
\end{proof}

\section{Motives of free normed spectra}\label{sec:motive_of_free}
We now compute $D_2^{\mot}(S^{p,q}) \wedge \H\Z/2$ for many values of $(p,q)$.
This was essentially already done in the proof of Lemma \ref{lemm:squaring-class}.

\begin{lemma} \label{lemm:exceptional-spectrum-additive-structure}
	\,
	\begin{enumerate}
\item For $E \in \SH(S)$ there is a cofiber sequence
\[ \Sigma E \wedge E \to \Sigma D^{\mot}_2(E) \to D^{\mot}_2(\Sigma E). \]

\item Let $m \in \Z$ and $k \ge -1$ and assume that $1/2 \in S$. There is an equivalence
\begin{gather*}
  D_2^\mot(S^{2m-k, m}) \wedge \H\Z/2 \wequi\\
  T^{2m} \wedge S^{-k} \wedge \left(S^{-k} \vee S^{-k+1} \vee \dots S^{-1} \vee S^{0} \vee \Gm \vee \bigvee_{n \ge 1} (S^0 \vee \Gm) \wedge T^n\right) \wedge \H\Z/2.
\end{gather*}
(Here the sum $S^{-k} \vee S^{-k+1} \vee \dots S^{-1} \vee S^{0}$ is treated as empty if $k = -1$.)
\end{enumerate}
\end{lemma}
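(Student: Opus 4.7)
The plan for part (1) is to realize $D^{\mot}_2(E)$ as the motivic $C_2$-orbits $(E \wedge E)_{hC_2}$ of the swap action and exploit the $C_2$-equivariant cofiber sequence
\[(C_2)_+ \to S^0 \to S^\sigma,\]
where $\sigma$ is the sign representation. Smashing with $E \wedge E$ (carrying the swap action) and taking $(-)_{hC_2}$ yields
\[E \wedge E \to D^{\mot}_2(E) \to (S^\sigma \wedge E \wedge E)_{hC_2}.\]
Since the swap action on $S^1 \wedge S^1$ is conjugate to the regular representation $\triv \oplus \sigma$, one has $\Sigma E \wedge \Sigma E \wequi (S^1 \wedge S^\sigma) \wedge (E \wedge E)$ equivariantly, whence $D^{\mot}_2(\Sigma E) \wequi \Sigma(S^\sigma \wedge E \wedge E)_{hC_2}$. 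Suspending the earlier cofiber sequence then produces the one in (1).

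For part (2), I would induct on $k$, taking as base case $k = 0$ the computation carried out in the proof of Lemma~\ref{lemm:squaring-class}. Applying part (1) with $E = S^{-k} \wedge T^m$ produces
\[\Sigma S^{-2k} \wedge T^{2m} \to \Sigma D^{\mot}_2(S^{-k} \wedge T^m) \to D^{\mot}_2(S^{-k+1} \wedge T^m),\]
linking the level-$k$ formula to the level-$(k-1)$ formula. Iterating upward from $k = 0$ covers all $k \ge 1$, while a single downward step from $k = 0$ to $k = -1$, obtained by applying part (1) with $E = T^m$, handles the remaining case. Upon smashing with $\H\Z/2$, the goal is to show that each such cofiber sequence splits, with the first map identifying (up to a unit) with the inclusion of the bottom Tate summand $T^{2m} \wedge S^{-2k} \wedge \H\Z/2$ predicted by the level-$k$ formula. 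Granting this splitting, combining with the inductively known level-$(k-1)$ formula on the right-hand side and desuspending yields the level-$k$ formula.

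The main obstacle is establishing this splitting after $\wedge \H\Z/2$. The transfer-like map
\[\Sigma E \wedge E \wedge \H\Z/2 \to \Sigma D^{\mot}_2(E) \wedge \H\Z/2\]
must be identified (up to a unit of $\H\Z/2^{**}$) with the inclusion of a specific distinguished wedge summand. I would verify this by computing directly on $\H\Z/2_{**}$: the transfer is built from the $C_2$-equivariant cofiber sequence above, and via the explicit wedge decomposition of $D^{\mot}_2(-) \wedge \H\Z/2$ from Lemma~\ref{lemm:squaring-class} one identifies its image with the bottom Tate summand nontrivially, yielding the needed retraction.
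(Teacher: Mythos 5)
For part (1) you take a genuinely different route from the paper: the paper deduces the cofiber sequence formally from the facts (established in the companion papers) that $D_2^{\mot}$ is reduced and $2$-excisive with cross effect $cr_2(\Sigma E,\Sigma E)\wequi E\wedge E$, whereas you want to build it from the $C_2$-cofiber sequence $(C_2)_+\to S^0\to S^\sigma$ applied to homotopy orbits. The difficulty is that $D_2^{\mot}(E)$ is \emph{not} the naive homotopy orbit spectrum $(E\wedge E)_{hC_2}$: the extended power occurring in $\NSym$ is formed using the geometric (étale) classifying space, which is exactly why $D_2^{\mot}(S^0)\wedge\H\Z/2$ decomposes into Tate summands $S^0\vee\Gm\vee T\vee\cdots$ rather than the purely simplicial suspensions that naive orbits of the trivial action would produce. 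Your argument can be repaired by smashing the cofiber sequence with the (contractible, free) geometric model before taking strict orbits, but that identification of $D_2^{\mot}$ is a nontrivial input which the paper's excisive-functor argument is designed to bypass; as written, your starting point is incorrect.

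For part (2) your skeleton (reduce to $m=0$ implicitly via Thom twists, induct on $k$ using (1), base case $k=0$, split each cofiber sequence after $\wedge\H\Z/2$) matches the paper, but the decisive step is missing and the logic of the inductive step is slightly inverted. For $k\ge 1$ the \emph{middle} term of the cofiber sequence is the unknown, so you cannot "identify the first map with the inclusion of the bottom Tate summand predicted by the level-$k$ formula" --- that formula is what is being proved. What must be shown is that the connecting map $D_2^{\mot}(S^{-k+1}\wedge T^m)\wedge\H\Z/2\to \Sigma^2 S^{-2k}\wedge T^{2m}\wedge\H\Z/2$, whose source and target are both known, vanishes; this is equivalent to your retraction and then determines the middle term. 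More seriously, "computing directly on $\H\Z/2_{**}$" does not decide this: the wedge decompositions of the known terms say nothing about the value of the boundary map, nor about the bottom component $f_0\in[S^0,S^0]_{\H\Z/2}$ in the $k=-1$ case. The paper has to reduce to $S=\Spec(\Z[1/2])$, invoke Proposition \ref{prop:HZ-dedekind} to kill all off-diagonal components for degree reasons and to see that the remaining component lives in $[S^0,S^0]_{\H\Z/2}=\Z/2$, and then settle that residual $\Z/2$-valued question by complex realization together with the classical identification of $D_2(S^i)$ as a Thom spectrum over $\R\P^\infty$. Some such external, non-formal input is unavoidable here, and your proposal does not supply one.
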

\begin{proof}
By Corollaries \ref{corr:D-mot-excisive} and \ref{corr:D-mot-reduced}, the functor $D^\mot_2$ is $2$-excisive and reduced. By Proposition \ref{prop:D2-mot-cr2}, the cross effect is given by $cr_2(\Sigma E, \Sigma E) \wequi E \wedge E$. The cofiber sequence is now obtained from Proposition \ref{prop:fibn-sequence}.

Now we prove the second half. We work in the category of $\H\Z/2$-modules and put $D_2 = D_2^\mot$. We reduce to $m=0$ by using Proposition \ref{prop:Dn-thom-iso}. The case $k=0$ is a special case of Proposition \ref{prop:bmu}.

For the case $k=-1$, we consider the cofiber sequence with $E = S^0$ to get 
\[
\Sigma S^0 \xrightarrow{f} \Sigma D_2(S^0) \to D_2(S^1).
\] By Proposition \ref{prop:Dn-thom-iso}, we have a decomposition
\[
D_2(S^0) \simeq S^0 \oplus \Gm \oplus T \oplus \dots,
\] and consequently $f$ corresponds to maps $f_0: \Sigma S^0 \to \Sigma S^0$, $f_1: \Sigma S^0 \to \Sigma \Gm$, and so on (only finitely many of which are non-zero). We claim that $f_0$ is an isomorphism; this implies that $D_2(S^1) = \cof(f)$ is equivalent to the sum of the remaining terms\NB{ref?}, which was to be shown. We may assume that $S = \Spec(\Z[1/2])$, everything being stable under base change. Then we have $[S^0, S^0]_{\H\Z/2} = \Z/2$ (by Proposition \ref{prop:HZ-dedekind}(4)), so $f_0$ is an equivalence as soon as it is non-zero. This we may check by complex realisation, using Example \ref{ex:Dmot-betti-realisation} and the fact that the homology of $D_2(S^i)$ is known classically for all $i$.\footnote{In fact classically $D_2(S^i)$ is the Thom complex of a virtual (real) vector bundle of rank $2i$ on $BC_2\wequi \R\P^\infty$, and hence $D_2(S^i) \wedge \H\Z/2 \wequi \Sigma^{2i} \R\P^\infty \wedge \H\Z/2$.}

Now we deal with $k > 0$. We do this by induction; suppose the result has been proved for some $k$. We consider the cofiber sequence with $E = S^{-k-1}$ to get 
\[
\Sigma S^{-2(k+1)} \to \Sigma D_2(S^{-k-1}) \to D_2(S^{-k}) \xrightarrow{g} \Sigma^2 S^{-2(k+1)} \wequi S^{-2k}.
\] The result will follow if we can show that $g=0$, whence 
\[
D_2(S^{-k})  \simeq \Sigma D_2(S^{-k-1})  \vee S^{-2k},
\] and the claim follows from the inductive hypothesis. For this we note that $D_2(S^{-k}) = S^{-2k} \oplus S^{-2k+1} \oplus \dots \oplus S^{-k} \oplus S^{-k} \wedge \Gm \oplus \dots$, by induction. Thus $g$ corresponds to maps $g_0: S^{-2k} \to S^{-2k}$, $g_1: S^{-2k+1} \to S^{-2k}$, \dots, $g_k: S^{-k} \to S^{-2k}$, $g_{k+1}: S^{-k} \wedge \Gm \to S^{-2k}$, and so on. We may again assume that $S=\Spec(\Z[1/2])$ and hence $g_1, \dots, g_k = 0$ by Proposition \ref{prop:HZ-dedekind}(4) and $g_{k+1}, g_{k+2}, \dots = 0$ by Proposition \ref{prop:HZ-dedekind}(1). Again $g_0 \in [S^{-2k}, S^{-2k}]_{\H\Z/2} = \Z/2$ and so we may check that $g_0 = 0$ after complex realisation, where it follows from classical results.
\end{proof}
\begin{remark} \label{rmk:identify-cofibration-map}
By construction, the map $\Sigma D_2(E) \to D_2(\Sigma E)$ coincides with the diagonal from Definition \ref{def:conorm} (for $F=S^1$).
\end{remark}

\begin{proposition} \label{prop:free-normed-spectra}
Let $k \ge -1, n \in \Z$ and $r = 2n-k$. Then $D_i^\mot(S^{r,n}) \wedge \H\Z/2$ is a summand of a split Tate motive. Moreover, there exists $1 > q > 1/2$\NB{FWIW, $q=5/9$ seems to work}{} such that if $S$ is essentially smooth over a Dedekind scheme and $i>0$, then $\ul{\pi}_{**}(\H\Z/2 \wedge D_i^\mot(S^1))$ has a vanishing line of slope $q$, through the origin.
\end{proposition}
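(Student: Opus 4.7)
The plan is to prove both claims simultaneously by induction on $i$. The case $i = 1$ is immediate since $D_1^\mot \wequi \id$, so $D_1^\mot(S^{r, n}) = S^{r, n}$ is itself a Tate motive. The case $i = 2$ is Lemma~\ref{lemm:exceptional-spectrum-additive-structure}(2), which exhibits $D_2^\mot(S^{r, n}) \wedge \H\Z/2$ as an explicit wedge of motivic Tate spheres. For the vanishing line in the $i = 2$ case, reading off the bidegrees of the summands of $D_2^\mot(S^1) \wedge \H\Z/2$ from the lemma — namely $(2m+1, m)$ and $(2m+2, m+1)$ for $m \geq 0$ — one sees that they cluster on the line of slope $2$ in the $(p, q)$-plane through the origin; combined with the support bounds on $\H\Z/2_{**}$ from Proposition~\ref{prop:HZ-dedekind}, this yields a vanishing line of slope strictly greater than $1/2$, with $q = 5/9$ leaving ample room (as the footnote suggests).

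For the inductive step at $i \geq 3$, the plan is to exploit a structural decomposition of $D_i^\mot$ coming from the subgroup inclusions $S_a \times S_{i-a} \hookrightarrow S_i$ for $1 \leq a < i$. After smashing with $\H\Z/2$, these produce cofiber sequences that express $D_i^\mot(E) \wedge \H\Z/2$ as an iterated extension built from smash products $D_a^\mot(E) \wedge D_{i-a}^\mot(E) \wedge \H\Z/2$ with $a, i-a < i$. Since the class of summands of split Tate motives is closed under smash products and extensions, and since vanishing lines through the origin combine well under smash products (the resulting slope being at worst the minimum of the two input slopes), the inductive hypothesis propagates both parts of the proposition. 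Thom-isomorphism results as in Proposition~\ref{prop:Dn-thom-iso} further reduce $S^{r, n}$ to $S^0$ up to Tate shifts, simplifying the bookkeeping.

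The principal technical obstacle is setting up the structural decomposition of $D_i^\mot$ in the motivic-normed setting and verifying that the mod-2 smash indeed admits the described Tate-summand form. A natural alternative route is to decompose $D_i^\mot(-) \wedge \H\Z/2$ via the $2$-Sylow subgroup of $S_i$ (using a transfer argument to reduce to iterated $D_2^\mot$'s), and thereby inherit the $i = 2$ computation from Lemma~\ref{lemm:exceptional-spectrum-additive-structure}(2). A secondary concern is ensuring that the slope bound $1/2 < q < 1$ is preserved, not degraded, at each smash-product step; the choice $q = 5/9$ leaves enough slack to absorb contributions from the support of $\H\Z/2_{**}$ and from the boundary summands of each $D_2^\mot$ factor.
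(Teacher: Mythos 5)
Your base cases are fine and your ``alternative route'' via the $2$-Sylow subgroup is in fact the paper's strategy, but the inductive step you actually propose has a genuine gap. The subgroup $\Sigma_a \times \Sigma_{i-a} \subset \Sigma_i$ only lets you split $D_i^\mot(E)\wedge\H\Z/2$ off of $D_a^\mot(E)\wedge D_{i-a}^\mot(E)\wedge\H\Z/2$ when the index $\binom{i}{a}$ is odd (the transfer composed with multiplication is $\binom{i}{a}$, so one needs it to be a unit mod $2$); by Kummer's theorem such an $a$ exists precisely when $i$ is \emph{not} a power of $2$. For $i=2^a$ there is no such decomposition and no ``iterated extension'' by smash products of lower $D_j$'s: one is forced to use $\Sigma_{2^{a-1}}\wr\Sigma_2\subset\Sigma_{2^a}$ (odd index $\tfrac12\binom{2^a}{2^{a-1}}$) and hence to control $D_2^\mot$ applied to the \emph{output} of the previous stage, i.e.\ to infinite sums of tensor products of Tate spheres. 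This requires (i) the $2$-excisive formula $D_2(A\oplus B)=D_2(A)\oplus D_2(B)\oplus A\otimes B$ to expand $D_2$ of a sum, and (ii) a Cartan-type comparison of $D_2(\bigotimes_\beta X_\beta)$ with $\bigotimes_\beta D_2(X_\beta)$ compatibly with the maps to $\NSym(S^1)$, neither of which appears in your sketch. Relatedly, ``summands of split Tate motives are closed under extensions'' is false as stated: a cofiber sequence of split Tate motives need not split, and the paper has to verify vanishing of the relevant attaching maps (or exhibit explicit split surjections) to avoid exactly this.

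The vanishing-line part has a second, subtler gap. Closure of vanishing lines through the origin under smash products and sums is fine, but iterating $D_2$ does not obviously preserve ``lies strictly below the line of slope $q$'': for instance $D_2(S^{0,0})\wedge\H\Z/2$ contains an $S^{0,0}$ summand sitting \emph{on} every line through the origin, so one cannot simply say the estimate propagates. The paper handles this by isolating an explicit class $\scr S$ of Tate spheres $\Z/2[2l-a](l)$ (with $a\ge -1$, $l\ge 0$, $2l-a>0$, $l/(2l-a)<q$, and $l>0$ unless $a=-1$), choosing $q$ so that $T^2\wedge\Gm\in\scr S$, and checking by a case analysis on $a$ that $D_2$ carries $\scr S$ into sums of elements of $\scr S$; only then does the slope bound survive the induction. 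Your proposal gestures at ``enough slack'' for $q=5/9$ but does not identify, let alone verify, the invariant that must be preserved. To repair the argument, replace the $\Sigma_a\times\Sigma_{i-a}$ extensions by the odd-index subgroups above (Kummer for $i$ not a power of $2$, the wreath product for $i=2^a$), and run the induction on a class of spheres closed under $D_2$ and lying strictly below the chosen line.
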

\begin{proof}
Throughout we put $D_i := D_i^\mot$, and we work in the category of $\H\Z/2$-modules.
First we treat the case $k \ge 0$, which is much simpler. It is well-known that $\Sigma_i$ has Sylow $2$-subgroup a product of iterated wreath products of the group $\Sigma_2$ (we will review this below). It hence follows from Proposition \ref{prop:Dmot-iterated-wreath}, Proposition \ref{prop:Dmot-transfer} and Example \ref{ex:D-mot-prod} that $D_i(\Z/2[r](n))$ is a summand of $D_2^{(n_1)}(\Z/2[r](n)) \otimes D_2^{(n_2)}(\Z/2[r](n)) \otimes \dots \otimes D_2^{(n_s)}(\Z/2[r](n))$, for some numbers $n_1, \dots, n_s$. Here $D_2^{(i)}$ denotes the $i$-fold iterate of $D_2$. It hence suffices to prove that $D_2^{(i)}(\Z/2[r](n))$ is of the required form. More specifically we prove by induction on $i$ that $D_2^{(i)}(\Z/2[r](n))$ is a sum of terms of the form $\Z/2[2l-a](l)$. For $D_2^{(1)} = D_2$, this follows from Lemma \ref{lemm:exceptional-spectrum-additive-structure}. The induction step now follows from Proposition \ref{prop:D2-mot-cr2}, i.e. the fact that $D_2(A \oplus B) = D_2(A) \oplus D_2(B) \oplus A \otimes B$.

It thus remains to deal with $D_i(\Z/2[1])$. Denote by $\scr S \subset \{\Z/2[2l-a](l)\}_{l,a \in \Z}$ the subset of objects satisfying the following conditions:
\begin{enumerate}[(1)]
\item $a \ge -1$,
\item $l \ge 0$, $2l-a > 0$
\item $l/(2l-a) < q$, and
\item $l > 0$ unless $a = -1$.
\end{enumerate}
Here we pick $1 > q > 1/2$ large enough so that $T^2 \wedge \Gm \in \scr S$. Condition (3) of course means that $(2l-a,l)$ is below the line of slope $q$ through the origin. Note that, since $q > 1/2$
\begin{enumerate}[(a)]
\item whenever $X \in \scr S$ and $e \ge 0$, also $X[2e](e) \in \scr S$. Furthermore we claim that
\item if $X = \Z/2[2l-a](l) \in \scr S$, then $D_2(X) = \bigoplus_i X_i$, with each $X_i \in \scr S$.
\end{enumerate}
To see this, we examine cases. Suppose $a = -1$. By (a) and Proposition \ref{prop:Dn-thom-iso}, we may assume $l=0$. Now $D_2(X) = D_2(S^1) = T \oplus T[1] \oplus T^2 \oplus \dots$, by Lemma \ref{lemm:exceptional-spectrum-additive-structure}, which satisfies (1) to (4) (in particular (3) holds because $q>1/2$). Now consider $a=0$. By (a), we may assume that $l=1$. Then $D_2(X) = D_2(T) = T^2 \wedge (S^0 \oplus \Gm \oplus T \oplus \dots)$, which satisfies (1) to (4) because $T^2 \wedge \Gm \in \scr S$ by construction. Now let $a > 0$. Then $D_2(X) = X^{\otimes 2} \wedge (S^0 \oplus S^1 \oplus \dots \oplus S^{a-1} \oplus S^a \wedge D_2(S^0))$, again by Lemma \ref{lemm:exceptional-spectrum-additive-structure}. We note that $X^{\otimes 2} \in \scr S$, and hence also $X^{\otimes 2} \wedge S^e \in \scr S$ for all $e \ge 0$. Since $a>0$, $S^a \wedge D_2(S^0)$ is a sum of terms below the line of slope $1/2 < q$ through the origin, and hence $X^{\otimes 2} \wedge S^a \wedge D_2(S^0)$ is a sum of terms below the line of slope $q$. Hence (3) is satisfied. The other conditions are easy. Hence claim (b) is proved.

Put $\scr F = \NSym(\Z/2[1])$.
Here \[ \NSym \wequi \bigoplus_{i \ge 0} D_i \] denotes the free normed object functor \cite[Theorem 3.10]{bachmann-MGM}.
We shall prove by induction on $i \ge 1$ that there exists $E_i = \bigoplus_\alpha \bigotimes_\beta X_{\alpha, \beta}^i$ (where the sum is infinite but the tensor products are all finite) with $X_{\alpha, \beta}^i \in \scr S$, together with maps $\phi_{\alpha, \beta}^i: X_{\alpha, \beta}^i \to \scr F$ such that the induced map $\phi_i: E_i \to \scr F$ (using the multiplication in $\scr F$) is a split surjection onto the summand $D_i(\Z/2[1]) \subset \scr F$.

For $i = 1$, we just need $D_2(S^1) \in \scr S$, which holds by (b). Now suppose that the result has been proved for all $i' < i$. If $i$ is not a power of $2$, we may write $i = i_1 + i_2$ such that $i \choose i_1$ is odd (use Kummer's theorem\NB{ref}). It follows that $\Sigma_{i_1} \times \Sigma_{i_2} \subset \Sigma_i$ has odd index, and in particular the product $E_i := E_{i_1} \otimes E_{i_2} \to \scr F$ has the required properties (see again Proposition \ref{prop:Dmot-iterated-wreath}, Proposition \ref{prop:Dmot-transfer} and Example \ref{ex:D-mot-prod}). Now assume that $i$ is a power of $2$, say $i = 2^a$. Splitting a set of $2^a$ elements into two sets of $2^{a-1}$ elements which may also be swapped, we find $\Sigma_{2^{a-1}} \wr \Sigma_2 \subset \Sigma_{2^a}$. The index is $\frac{1}{2} {2^a \choose 2^{a-1}}$, which is again odd. It hence follows from Proposition \ref{prop:Dmot-iterated-wreath} and Proposition \ref{prop:Dmot-transfer} that $\phi': D_2(E_{2^{a-1}}) \xrightarrow{D_2(\phi_{2^{a-1}})} D_2(\scr F) \to \scr F$ is a split surjection onto $D_{2^a}(\Z/2[1])$. Unfortunately $D_2(E_{2^{a-1}})$ is not of the required form. It follows from Proposition \ref{prop:D2-mot-cr2} (and the fact that $D_2$ preserves filtered colimits) that
\[ D_2(E_{2^{a-1}}) = \bigoplus_{\alpha} D_2\left(\bigotimes_\beta X_{\alpha, \beta}^{2^{a-1}}\right) \oplus \bigoplus_{\alpha \ne \alpha'} \bigotimes_\beta X_{\alpha, \beta}^{2^{a-1}} \otimes \bigotimes_{\beta'} X_{\alpha', \beta'}^{2^{a-1}}. \]
By Lemma \ref{lemm:ur-cartan-formula}, for each $\alpha$ the map
\[ D_2\left( \bigotimes_\beta X_{\alpha, \beta}^{2^{a-1}} \right) \xrightarrow{D_2(\prod_\beta \phi_{\alpha, \beta})} D_2(\scr F) \to \scr F \]
factors as
\[ D_2\left( \bigotimes_\beta X_{\alpha, \beta}^{2^{a-1}} \right) \to \bigotimes_\beta D_2(X_{\alpha,\beta}^{2^{a-1}}) \xrightarrow{\bigotimes_\alpha D_2(\phi_{\alpha, \beta})} \bigotimes D_2(\scr F) \to \scr F. \]
It follows that if we put
\[ E_{2^a} = \bigoplus_{\alpha} \bigotimes_\beta D_2(X_{\alpha, \beta}^{2^{a-1}}) \oplus \bigoplus_{\alpha \ne \alpha'} \bigotimes_\beta X_{\alpha, \beta}^{2^{a-1}} \otimes \bigotimes_{\beta'} X_{\alpha', \beta'}^{2^{a-1}}, \]
Then $\phi': D_2(E_{2^{a-1}}) \to \scr F$ factors as $D_2(E_{2^{a-1}}) \to E_{2^a} \xrightarrow{\phi_{2^a}} \scr F$. In particular $\phi_{2^a}$ is a split surjection onto $D_{2^a}(\Z/2[1])$, since $\phi'$ is. Observation (b) now implies that $E_{2^a}$ is of the required form.

Finally we come to the vanishing line. Let us note that if $E, F$ have a vanishing line of slope $q$ through the origin, then so does $E \otimes F$, and similarly if $\{E_i\}_{i \in I}$ all have a vanishing line of slope $q$ through the origin, then so does $\bigoplus_i E_i$. Hence it suffices to show that if $X \in \scr S$, then $X$ has a vanishing line of slope $q$. We note that $S^0 \wedge \H\Z/2$ ``almost'' has a vanishing line of slope $q$: the sheaves $\ul{\pi}_{**} \H\Z/2$ are all located to the right \emph{or on} the line (namely $\ul{\pi}_{0,0}$ is on the line). This follows from Proposition \ref{prop:HZ-dedekind}(3). Now if $X = \Z/2[x](y) \in \scr S$ then $(x,y)$ is to the right of the vanishing line (by (1) and (3)), and hence so is $\ul{\pi}_{**}X$. This concludes the proof.
\end{proof}

\section{Normed spectra of characteristic $2$}\label{sec:splitting_of_1ss2}
The goal of this section is to prove Theorem~\ref{thm:1-ss-2-splits} which asserts that the universal normed spectrum of characteristic $2$ splits as a sum of shifts and twists of $\H\Z/2$. 
Given a map $x:\1\to \1$ in $\SH(S)$ we write $\bar{x}:\NSym(\1)\to \1$ for the map obtained by adjunction. 
The universal normed spectra of characteristic $x$ is defined as follows.

\begin{definition}\label{def:char}
Let $x:\1\to\1$ be a map in $\SH(S)$. The \emph{universal normed spectrum of characteristic $x$}, written $\1\sslash x$, is defined to be the pushout in $\NAlg(\SH(S))$
\begin{equation*}
\begin{CD}
\NSym(\1) @>{\bar{x}}>> \1 \\
@V{\bar{0}}VV     @VVV \\
\1    @>>>       \1 \sslash x.
\end{CD}
\end{equation*}
\end{definition}

We note that, essentially by construction, $\1 \sslash x$ is a normed spectrum of characteristic $x$, i.e., we have $x=0$ in $\pi_0(\1 \sslash x)$. The universality of $\1 \sslash x$ is expressed as follows.

\begin{lemma} \label{lemm:1-ss-2-versal}
Let $E \in \NAlg(\SH(S))$. The space of morphisms $\1 \sslash x \to E$ in $\NAlg(\SH(S))$ is equivalent to the space of paths in $\Map(\1, E)$ from $0$ to $x$. In particular, the unit map $\1 \to E$ factors (in general non-uniquely!) through $\1 \to \1 \sslash x$ if and only if $x = 0$ in $\pi_0(E)$.
\end{lemma}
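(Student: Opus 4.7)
The plan is to apply $\Map_{\NAlg(\SH(S))}(-, E)$ to the defining pushout square and read off both statements from the resulting pullback of spaces.

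First I would note that $\1$ is the initial object of $\NAlg(\SH(S))$ (being $\NSym(0)$, or directly), so $\Map_{\NAlg}(\1, E) \wequi \pt$. Next, by the defining adjunction $\NSym \dashv \mathrm{forget}$, we have
\[
\Map_{\NAlg(\SH(S))}(\NSym(\1), E) \wequi \Map_{\SH(S)}(\1, E) = \Map(\1, E).
\]
Applying $\Map_{\NAlg}(-, E)$ to the pushout of Definition~\ref{def:char} thus gives a pullback
\[
\Map_{\NAlg}(\1 \sslash x, E) \wequi \pt \times_{\Map(\1, E)} \pt,
\]
where the two maps $\pt \to \Map(\1, E)$ are the images under the adjunction of $\bar x$ and $\bar 0$ post-composed with the (unique) unit map $\1 \to E$ in $\NAlg(\SH(S))$.

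The only point requiring care is verifying that these two points of $\Map(\1, E)$ are precisely $x$ and $0$. Unwinding the adjunction, the map adjoint to the $\NAlg$-composite $\NSym(\1) \xrightarrow{\bar x} \1 \to E$ is obtained by precomposing with the unit $\1 \to \NSym(\1)$ of the adjunction; by the very definition of $\bar x$ as the adjoint of $x : \1 \to \1$, this yields $x : \1 \to \1 \to E$, i.e.\ the image of $x \in \pi_0(\1)$ under the unit of $E$. The analogous identification with $0$ is immediate. Therefore the pullback $\pt \times_{\Map(\1, E)} \pt$ is exactly the space of paths from $0$ to $x$ in $\Map(\1, E)$, proving the first assertion.

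For the ``in particular'' clause, recall that the path space between two points of a space is non-empty if and only if the points lie in the same connected component. Since $\pi_0 \Map(\1, E) = [\1, E]_{\SH(S)} = \pi_0(E)$, non-emptiness of the path space from $0$ to $x$ is equivalent to $x = 0$ in $\pi_0(E)$; and, as $\1$ is initial in $\NAlg(\SH(S))$, non-emptiness of $\Map_{\NAlg}(\1 \sslash x, E)$ is equivalent to the unit map $\1 \to E$ factoring through $\1 \to \1 \sslash x$. I expect no real obstacle here beyond chasing the adjunction correctly in step four.
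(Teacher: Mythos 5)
Your proposal is correct and is essentially identical to the paper's argument: the paper likewise applies $\Map_{\NAlg(\SH(S))}(-,E)$ to the defining pushout, uses that $\1$ is initial and the adjunction $\NSym \dashv \mathrm{forget}$ to obtain the cartesian square $\pt \times_{\Map(\1,E)} \pt$ with the two points being $x$ and $0$, and deduces the ``in particular'' clause from path components. Your version just spells out the adjunction chase in more detail than the paper does.
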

\begin{proof}
By definition of $\1 \sslash x$  and since  $\1 \in \NAlg(\SH(S))$ is initial, we have a cartesian square
\[
\begin{tikzcd}
\Map_{\NAlg(\SH(S))}(\1 \sslash x, E) \ar{r} \ar{d} & * \ar{d}{x} \\
* \ar{r}{0} & \Map_{\SH(S)}(\1, E).
\end{tikzcd}
\]
This proves the first part. The second part is an immediate consequence.
\end{proof}

For the rest of this section, we will be interested only in $x = 2$ or $x=2_{\epsilon}$ (recall that $2_\epsilon = 1+\langle -1 \rangle$). We can approach the homology of $\1 \sslash 2$ via the following standard result, by setting $H = \H\Z/2$ and $x=2$.
\begin{lemma} \label{lemm:1-ss-2-identification}
Let $H \in \NAlg(\SH(S))$, $x \in \pi_0(\1)$ with $H_*(x) = 0$. Then $\1 \sslash x \wedge H \wequi \NSym(S^1) \wedge H$.
\end{lemma}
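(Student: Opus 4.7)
The plan is to apply $(-)\wedge H$ to the pushout defining $\1\sslash x$, use the hypothesis $H_*(x)=0$ to simplify the diagram, and then recognize the answer as $\NSym(S^1)\wedge H$ via base change of free normed algebras.

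First I would base change. The functor $(-)\wedge H: \NAlg(\SH(S)) \to \NAlg(\Mod_H) \wequi \NAlg(\SH(S))_{H/}$ is a left adjoint, hence preserves the defining pushout. Combining this with the compatibility $\NSym(\1)\wedge H \wequi \NSym_H(H)$ (base change of free normed algebras, established in Appendix \ref{sec:basechange}), this presents $\1\sslash x \wedge H$ as the pushout in $\NAlg(\Mod_H)$ of
\[ H \xleftarrow{\overline{x\wedge H}} \NSym_H(H) \xrightarrow{\bar 0} H. \]

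Next I would use the hypothesis. The free--forgetful adjunction yields $\Map_{\NAlg(\Mod_H)}(\NSym_H(H), H) \wequi \Map_{\Mod_H}(H,H)$, and $H_*(x)=0$ says precisely that $x\wedge H : H\to H$ is nullhomotopic on the right-hand side. This nullhomotopy produces an equivalence of diagrams between the above and the all-augmentation diagram $H \xleftarrow{\bar 0} \NSym_H(H) \xrightarrow{\bar 0} H$, so their pushouts agree.

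Finally I would identify this pushout. Since $H$ is initial in $\NAlg(\Mod_H)$, the augmentation $\bar 0: \NSym_H(H)\to H$ is equivalent to $\NSym_H$ applied to the zero map $H \to 0$ in $\Mod_H$, composed with the canonical identification $\NSym_H(0) \wequi H$. Because $\NSym_H$ is a left adjoint it preserves pushouts, so the pushout in question equals $\NSym_H$ applied to $0 \sqcup_H 0 \wequi \Sigma H = S^1\wedge H$. One more invocation of base change for $\NSym$ rewrites $\NSym_H(S^1\wedge H)$ as $\NSym(S^1)\wedge H$, completing the proof. The only real input is the base-change compatibility of $\NSym$ proved in the appendix; every other step is a formal consequence of the free--forgetful adjunction together with the fact that left adjoints preserve pushouts.
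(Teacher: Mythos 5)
Your proposal is correct and follows essentially the same route as the paper: smash the defining pushout with $H$ (a left adjoint, compatible with $\NSym$), use $H_*(x)=0$ to replace $\bar{x}$ by $\bar{0}$, identify $\bar{0}$ as $\NSym_H$ applied to $H\to 0$, and then compute the pushout by applying the colimit-preserving $\NSym_H$ to the module-level pushout $0\sqcup_H 0\wequi S^1\wedge H$. No gaps.
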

\begin{proof}
The functor $\NAlg(\SH(S)) \to \NAlg(\Mod_{H}), E \mapsto E \wedge H$ is left adjoint to the forgetful functor, and hence preserves colimits and free objects\NB{details?}. Consequently there is a pushout square in $\NAlg(\Mod_{H})$ as follows
\begin{equation} \label{eq:square-of-algebras}
\begin{CD}
\NSym_{H}(H) @>{\bar{x}}>> H \\
@V{\bar{0}}VV     @VVV \\
H    @>>>       \1 \sslash 2 \wedge H.
\end{CD}
\end{equation}
Since $x=0 \in [H, H]_{H}$, in the above diagram we have $\bar x = \bar 0$. Note that $H \wequi \NSym_{H}(0)$. We claim that $\bar{0} = F(0)$, where $0: H \to 0$ is the unique map.
Indeed in any adjunction $F \dashv U$, given $A \xrightarrow{\alpha} B \xrightarrow{\beta} UC$, we have $(\beta \circ \alpha)^\dagger = \beta^\dagger \circ F(\alpha)$, as is easily verified using the triangle identities.
The claim follows by applying this to the factorization $\1 \to 0 \to \1$ and noting that the adjoint of $0 \to \1$ is the equivalence $F(0) \wequi \1$ (both objects being initial).

On the other hand, the category $\Mod_{H}$ we have the pushout square
\begin{equation} \label{eq:square-of-modules}
\begin{CD}
H @>0>> 0  \\
@V0VV      @VVV \\
0      @>>> S^1 \wedge H.
\end{CD}
\end{equation}
The claim that $\bar{0} = F(0)$ implies that applying the colimit-preserving functor $\NSym_{H}$ to \eqref{eq:square-of-modules} yields \eqref{eq:square-of-algebras}, up to equivalence.
The result follows.
\end{proof}

We have constructed $\1 \sslash 2$ to be $2$-torsion. However more is true. 
\begin{lemma} \label{lemm:normed-2-h-torsion}
Let $1/2 \in S$ and $E \in \NAlg(\SH(S))$. Then $E$ is $2$-torsion if and only if $E$ is $2_\epsilon$-torsion.
\end{lemma}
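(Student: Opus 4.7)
The plan is to leverage the multiplicative $C_2$-norm on $E$ coming with its normed-algebra structure. A first easy observation: in $[\1,\1]_{\SH(S)}$ one has $2_\epsilon - 2 = \lra{-1} - 1$, together with the identity $\eta \cdot 2_\epsilon = 0$, and these give $2 \cdot 2_\epsilon = 2_\epsilon^2$. Consequently, if $2 \cdot \id_E = 0$ then $2_\epsilon^2 \cdot \id_E = 0$, so $2_\epsilon$ acts nilpotently on $E$; this already constitutes a partial form of one implication, but the main work is to strengthen this nilpotence to actual vanishing and to obtain the reverse implication. For both, the normed structure is essential.

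The key tool will be the multiplicative $C_2$-norm $N: \pi_{**}(E) \to \pi_{**}(E)$ provided by the normed structure on $E$. This operation is natural in $E$, multiplicative, sends $0$ to $0$, and satisfies a Cartan-type additive formula
\[
N(a+b) = N(a) + N(b) + \tr(a \cdot b),
\]
where the associated transfer $\tr$ obeys the projection formula $\tr(x) = \tr(1) \cdot x$ together with the normalisation $\tr(1) = 2_\epsilon$. The central computation is
\[
N(2) = N(1) + N(1) + \tr(1) = 1 + 1 + 2_\epsilon = 2 + 2_\epsilon
\]
in $\pi_0(\1_S)$; an analogous calculation applied to the expansion $2_\epsilon = 1 + \lra{-1}$ --- using $\lra{-1}^2 = 1$ to evaluate $N(\lra{-1})$ together with $\tr(\lra{-1}) = 2_\epsilon \cdot \lra{-1} = 2_\epsilon$ --- yields a companion formula for $N(2_\epsilon)$ in which the summand $2$ appears essentially.

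With these identities in hand, the argument concludes by naturality of $N$ together with $N(0) = 0$: if $2 \cdot \id_E = 0$ then $2 = 0$ in $\pi_0(E)$, so $N(2) = N(0) = 0$ in $\pi_0(E)$, and the identity $N(2) = 2 + 2_\epsilon$ forces $2_\epsilon \cdot \id_E = 0$. The reverse direction is completely symmetric, using the computed value of $N(2_\epsilon)$. The hypothesis $1/2 \in \mathcal{O}(S)$ enters in ensuring the good behaviour of the motivic normed structure, in particular that the cover $\Spec(\mathcal{O}_S[\sqrt{-1}]) \to \Spec(\mathcal{O}_S)$ is finite étale of degree two. The main obstacle I anticipate is setting up the Cartan formula and the normalisation $\tr(1) = 2_\epsilon$ rigorously in the motivic normed setting; these should follow from the power-operation framework of the companion papers \cite{colimits} and \cite{operations}, combined with the standard Tambara-type identities for multiplicative norms.
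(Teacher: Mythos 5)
Your strategy is the paper's: run the Tambara additive formula for the multiplicative norm along the quadratic \'etale cover $S' = S[T]/(T^2+1) \to S$ against $2 = 1+1$, obtaining $\NN_{S'/S}(2) = 2\NN_{S'/S}(1) + \tr_{S'/S}(1) = 2 + \tr_{S'/S}(1)$, and conclude from the vanishing of the left-hand side. The one genuine gap is the ``normalisation'' $\tr_{S'/S}(1) = 2_\epsilon$: this is \emph{not} a standard Tambara-type identity and does not follow from the formal power-operation framework. It is a geometric computation (the trace form of $S[i]/S$), which the paper isolates as Lemma \ref{lemm:trace-of-Zi} and proves over a general base with $1/2$ invertible by applying Hoyois's motivic Lefschetz fixed-point formula to $(x:y)\mapsto(-x^3:y^3)$; the answer there is $\lra{2}2_\epsilon$, which agrees with yours only after the further identity $\lra{2}2_\epsilon = 2_\epsilon$. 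You correctly flag this as the main obstacle, but it is the computational heart of the lemma, not a routine input.

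Two smaller points. The norm is a map $\pi_0(E|_{S'}) \to \pi_0(E)$, not an endomorphism of $\pi_0(E)$, so to invoke $\NN(0)=0$ you must know the relevant element vanishes \emph{over $S'$}. This is why the paper does not treat the two directions symmetrically by computing $\NN(2_\epsilon)$ separately (your route works, but needs the norm of the restricted unit $\lra{-1}$ and the Galois conjugate in the Cartan term, which you omit); instead it observes that $-1$ is a square on $S'$, so $2_\epsilon = 2$ in $\pi_0(\1_{S'})$ by Lemma \ref{lemm:lra-squares}, whence either hypothesis gives $2=0$ over $S'$ and the single identity $0 = 2 + \lra{2}2_\epsilon$ in $\pi_0(E)$ yields both implications at once. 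Your preliminary observation that $2\cdot 2_\epsilon = 2_\epsilon^2$ is correct but plays no role in the argument.
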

\begin{proof}
Let $S' = S[T]/(T^2 + 1)$; then $S'/S$ is finite étale. We shall use the following properties of $\NN_{S'/S}$:
\begin{itemize}
\item $\NN_{S'/S}(1) = 1$
\item $\NN_{S'/S}(0) = 0$
\item $\NN_{S'/S}(x+y) = \NN_{S'/S}(x) + \NN_{S'/S}(y) + \tr_{S'/S}(x\bar{y})$, where $\bar{y}$ denotes the canonical automorphism of $S'/S$ applied to $y$.
\end{itemize}
Here the second and third properties follow from the distributivity law\NB{details?}.

Suppose that $E$ is $2$-torsion or $2_\epsilon$-torsion.
In $S'$, $-1$ is a square and so $2_\epsilon = 2$ by Lemma \ref{lemm:lra-squares}, so in particular $2=0$ in $E|_{S'}$. Thus we get \[ 0 = \NN_{S'/S}(0) = \NN_{S'/S}(2) = \NN_{S'/S}(1+1) = 2\NN_{S'/S}(1) + \tr_{S'/S}(1). \] We have $\tr_{S'/S}(1) = \lra{2}2_\epsilon$ by Lemma \ref{lemm:trace-of-Zi} and hence \[ 0 = 2 + \lra{2}2_\epsilon. \] Since $\lra{2}$ is a unit, the result follows.
\end{proof}

\begin{lemma} \label{lemm:steenrod-action-moore-spectrum}
The canonical map $\alpha: \1/2_\epsilon \to \H\Z/2$ induces $\alpha_*: \H\Z/2_{**}(\1/2_\epsilon) \to \H\Z/2_{**}\H\Z/2$, where $\H\Z/2_{**}(\1/2_\epsilon)$ is free over $\H\Z/2_{**}$ on generators $1 \in \H\Z/2_{0,0}(\1/2_\epsilon), x \in \H\Z/2_{1,0}$ with $\alpha_*(1) = 1$ and $\alpha_*(x) = \tau_0$.
\end{lemma}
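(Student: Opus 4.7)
The plan is to first establish the $\H\Z/2$-module splitting of $\H\Z/2 \wedge \1/2_\epsilon$ from the defining cofiber sequence of $\1/2_\epsilon$, and then to identify $\alpha_*(x) = \tau_0$ via Kronecker duality against the Bockstein.

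First, smash the cofiber sequence $\1 \xrightarrow{2_\epsilon} \1 \xrightarrow{i} \1/2_\epsilon \xrightarrow{\partial} S^{1,0}$ with $\H\Z/2$. Since $\H\Z/2 \in \NAlg(\SH(S))$ is $2$-torsion, Lemma~\ref{lemm:normed-2-h-torsion} implies it is also $2_\epsilon$-torsion, so the map $2_\epsilon \wedge \H\Z/2$ is null. The resulting cofiber sequence therefore splits, yielding an equivalence of $\H\Z/2$-modules
\[ \H\Z/2 \wedge \1/2_\epsilon \wequi \H\Z/2 \vee \Sigma^{1,0} \H\Z/2. \]
Passing to $\pi_{**}$, this identifies $\H\Z/2_{**}(\1/2_\epsilon)$ as the free $\H\Z/2_{**}$-module on $1 \in \H\Z/2_{0,0}(\1/2_\epsilon)$ (image of the unit under $i_*$) and any class $x \in \H\Z/2_{1,0}(\1/2_\epsilon)$ satisfying $\partial_*(x) = 1$ (the ambiguity in $x$ being controlled by $\H\Z/2_{1,0}$, which vanishes over a Dedekind base by Proposition~\ref{prop:HZ-dedekind}(4)). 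The equality $\alpha_*(1) = 1$ is immediate from $\alpha \circ i = $ unit.

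For $\alpha_*(x) = \tau_0$: by Theorem~\ref{thm:HZ-dual-steenrod} together with Proposition~\ref{prop:HZ-dedekind}, one has $\H\Z/2_{1,0}\H\Z/2 = \Z/2 \cdot \tau_0$, so it suffices to show $\alpha_*(x) \ne 0$. I would do this by pairing with $Sq^1 = \beta \in \H\Z/2^{1,0}\H\Z/2$ (normalized so that $\langle \tau_0, Sq^1 \rangle = 1$): by adjunction of the Kronecker pairing,
\[ \langle \alpha_*(x), Sq^1 \rangle = \langle x, \alpha^*(Sq^1) \rangle = \langle x, Sq^1(\bar\alpha) \rangle, \]
where $\bar\alpha \in \H\Z/2^{0,0}(\1/2_\epsilon)$ denotes the cohomology class of $\alpha$. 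It thus remains to exhibit $Sq^1(\bar\alpha)$ as the class of $\H\Z/2^{1,0}(\1/2_\epsilon)$ dual to $x$. To see this, smash the defining cofiber sequence with $\H\Z$ instead: since $\eta = 0$ on $\H\Z$, the map $2_\epsilon \wedge \H\Z$ equals $2 \wedge \H\Z$, giving a natural equivalence $\H\Z \wedge \1/2_\epsilon \wequi \H\Z/2$ under which the connecting map is the integral Bockstein and $\bar\alpha$ corresponds to the tautological unit (using uniqueness of lifts modulo $\H\Z/2^{1,0}(\pt) = 0$). Reducing mod $2$ identifies $Sq^1(\bar\alpha)$ with the generator $\partial^*(1_{\Z/2}) \in \H\Z/2^{1,0}(\1/2_\epsilon)$, which pairs to $1$ with $x$ by the characterization $\partial_*(x) = 1$; hence $\alpha_*(x) = \tau_0$.

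The main technical subtlety is managing the interplay between $2$ and $2_\epsilon$: the splitting relies on $\H\Z/2$ being $2_\epsilon$-torsion (which is where Lemma~\ref{lemm:normed-2-h-torsion} is essential), while the Bockstein comparison uses that $2$ and $2_\epsilon$ become equal on $\H\Z$-modules. Beyond that, everything is a routine chase through two cofiber sequences together with the Kronecker pairing computation.
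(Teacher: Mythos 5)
Your first paragraph (the $\H\Z/2$-module splitting of $\H\Z/2\wedge\1/2_\epsilon$, the uniqueness of $x$ with $\partial_*(x)=1$ because $\H\Z/2_{1,0}=0$, and $\alpha_*(1)=1$) is fine and is what the paper dismisses as "clear". The gap is in the second step. The claim that $\H\Z/2_{1,0}\H\Z/2=\Z/2\cdot\tau_0$ is false over the bases of interest: $\scr A_{**}$ is free over $\H\Z/2_{**}$ on the Milnor monomials, and since $\xi_1$ has bidegree $(2,1)$ and $H^{1,1}(S,\Z/2)\ne 0$ (e.g.\ $\rho=[-1]\ne 0$ over $\Z[1/2]$, $\Q$, $\R$), the bidegree $(1,0)$ part also contains $\rho\,\xi_1$ (and more generally $[u]\xi_1$ for units $u$). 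Consequently it does \emph{not} suffice to show $\alpha_*(x)\ne 0$, and pairing against $Sq^1$ only extracts the $\tau_0$-coefficient of $\alpha_*(x)$; it says nothing about the $\xi_1$-components. The Remark following the lemma (due to Zhouli Xu) makes the danger concrete: for $\1/2$ one has $\alpha_*(x)=\tau_0+\rho\xi_1$. Note that every step of your argument — the splitting (since $\H\Z/2$ is $2$-torsion), the comparison with $\H\Z$ (where $2=2_\epsilon$), and the Bockstein pairing — applies verbatim with $\1/2$ in place of $\1/2_\epsilon$, so your argument would "prove" $\alpha_*(x)=\tau_0$ for $\1/2$ as well, which is false. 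An argument that cannot distinguish $\1/2$ from $\1/2_\epsilon$ cannot establish this lemma.

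What is missing is a determination of the \emph{entire} (co)action in the relevant degree, i.e.\ you must also show that the components of $\alpha_*(x)$ on $[u]\xi_1$ vanish, which dually amounts to showing that all operations other than $Sq^1$ act trivially on $\H\Z/2^{**}(\1/2_\epsilon)$ in the relevant bidegrees. This is where the specific geometry of $2_\epsilon$ enters, and it is the actual content of the paper's proof: by Lemma \ref{lemm:pi1-RP2} the inclusion $\Gm\wequi\R\P^1\hookrightarrow\R\P^2$ factors through $\Gm/2_\epsilon$ (this uses the squaring map on $\Gm$ inducing $2_\epsilon$, not $2$), so one can pull back the completely known Steenrod action on $H^{**}(\R\P^\infty)\wequi H^{**}\fpsr{u,v}/(u^2=\tau v+\rho u)$, where $Sq^1(u)=v$ and all other operations vanish, and then dualize (Lemma \ref{lemm:mot-hom-cohom-dual}, Theorem \ref{thm:HZ-steenrod-dedekind}(5)). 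To repair your proof you would either need to import this geometric input or otherwise compute the pairing of $\alpha_*(x)$ with the operations dual to $[u]\xi_1$; the Bockstein alone is not enough.
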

\begin{proof}
The computation of $\H\Z/2_{**}(\1/2_\epsilon)$ is clear. In order to establish the effect of $\alpha$ homology, by Lemma~\ref{lemm:mot-hom-cohom-dual} we may dualize and establish the effect on cohomology instead\NB{details}. In other words we need to determine the action of the motivic Steenrod algebra on $\H\Z/2^{**}(\1/2_\epsilon)$. By stability, we may as well do this for $1/2_\epsilon \wedge \Gm$, and then we consider the map $\1/2_\epsilon \wedge \Gm \to \R\P^2$ from Lemma \ref{lemm:pi1-RP2}(1).
We have $H^{**}(\R\P^\infty) \wequi H^{**}\fpsr{u,v}/u^2 = \tau v + \rho u$, and the Steenrod action is $\Sq^1(u) = v$, with all other operations zero, by Lemma \ref{lemm:action-on-Bsigma2}.
By Lemma \ref{lemm:pi1-RP2}(2), our map sends $u$ to $1$ and $v$ to $x$.
Hence $\Sq^1(1) = x$ and all other operations act trivially, which is precisely dual to the map that we claimed (use Theorem \ref{thm:HZ-steenrod-dedekind}(5)).
\end{proof}

\begin{remark}[Zhouli Xu]
This lemma is false in general for $\1/2$ in place of $\1/2_\epsilon$. Indeed still $\H\Z/2_{**}(\1/2)$ is free on generators $1, x$, but $\alpha_*(x) = \tau_0 + \rho \xi_1$.
\end{remark}

Lemma \ref{lemm:1-ss-2-versal} provides us with a  map of normed spectra $\1 \sslash 2 \to \H\Z/2$, which is even unique in this case.
The crux of our splitting result is the following.

\begin{lemma} \label{lemm:1-ss-2-surjection}
Let $2$ be invertible on $S$. The  map of normed spectra 
$\1 \sslash 2 \to \H\Z/2$ induces a surjection on $\H\Z/2_{**}(-)$.
\end{lemma}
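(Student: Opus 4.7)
The plan is to identify the map of the lemma, after smashing with $\H\Z/2$, as the morphism of normed $\H\Z/2$-algebras classified by the class $\tau_0$ in the dual motivic Steenrod algebra, and then invoke a motivic Steinberger-type generation result.

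First, apply Lemma \ref{lemm:1-ss-2-identification} with $H = \H\Z/2$ and $x = 2$: since $2 = 0$ on $\H\Z/2$, this yields an equivalence $\1 \sslash 2 \wedge \H\Z/2 \wequi \NSym_{\H\Z/2}(\H\Z/2 \wedge S^1)$ in $\NAlg(\Mod_{\H\Z/2})$. Under this identification, the map
\[
\1 \sslash 2 \wedge \H\Z/2 \longrightarrow \H\Z/2 \wedge \H\Z/2
\]
is a morphism of normed $\H\Z/2$-algebras, and by the $\NSym_{\H\Z/2} \dashv \text{forget}$ adjunction is determined by a single class $\alpha \in \pi_{1,0}(\H\Z/2 \wedge \H\Z/2) = \H\Z/2_{1,0}\H\Z/2$.

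Second, identify $\alpha = \tau_0$. By Lemma \ref{lemm:normed-2-h-torsion} and the universal property (Lemma \ref{lemm:1-ss-2-versal}), there is an equivalence $\1 \sslash 2 \wequi \1 \sslash 2_\epsilon$ in $\NAlg(\SH(S))$, and so the unit $\1 \to \H\Z/2$ factors on underlying spectra as $\1 \to \1/2_\epsilon \to \1 \sslash 2 \to \H\Z/2$. The composite $\1/2_\epsilon \to \H\Z/2$ is the canonical map of Lemma \ref{lemm:steenrod-action-moore-spectrum} and hence sends the generator of $\H\Z/2_{1,0}(\1/2_\epsilon)$ to $\tau_0$. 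Unwinding the free--forgetful adjunction on $\NSym_{\H\Z/2}(\H\Z/2 \wedge S^1)$ identifies $\alpha$ with this image, so $\alpha = \tau_0$.

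Third, conclude using the normed structure. Since the map is one of normed $\H\Z/2$-algebras, the image of $\H\Z/2_{**}(\1 \sslash 2) \to \H\Z/2_{**}\H\Z/2$ is the sub-normed-$\H\Z/2_{**}$-algebra generated by $\alpha = \tau_0$ (i.e., the closure of $\tau_0$ under products and under the power operations inherited from $\NSym_{\H\Z/2}$). Surjectivity is therefore equivalent to the statement that $\tau_0$ generates the dual motivic Steenrod algebra $\H\Z/2_{**}\H\Z/2$ as a normed $\H\Z/2_{**}$-algebra, which is the motivic analog of Steinberger's theorem established via the power operation calculations of \cite{colimits} and \cite{operations}.

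The main obstacle is the third step: the identification of the image with the normed closure of $\tau_0$, and in particular the computational input that iterated power operations applied to $\tau_0$ produce all the generators $\tau_i$ and $\xi_j$ of $\H\Z/2_{**}\H\Z/2$. This is the heart of the technical work and relies crucially on the machinery developed in the preceding papers in the series.
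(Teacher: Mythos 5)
Your proposal is correct in substance and follows essentially the same route as the paper: produce $\tau_0$ in the image by factoring the unit through the Moore spectrum $\1/2_\epsilon$ (Lemmas \ref{lemm:normed-2-h-torsion} and \ref{lemm:steenrod-action-moore-spectrum}), note that the image is a subring closed under power operations because the map is one of normed spectra, and conclude by the Steinberger-type generation theorem from the prequel. One point to correct: your intermediate claim that $\1 \sslash 2 \wequi \1 \sslash 2_\epsilon$ in $\NAlg(\SH(S))$ does not follow from Lemmas \ref{lemm:normed-2-h-torsion} and \ref{lemm:1-ss-2-versal} --- those only yield maps in each direction (the factorizations are explicitly non-unique), with no control over the composites, and the paper itself leaves this equivalence open. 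Fortunately you never need it: the factorization $\1 \to \1/2_\epsilon \to \1 \sslash 2$ requires only that $2_\epsilon = 0$ in $\pi_0(\1 \sslash 2)$, which is exactly what Lemma \ref{lemm:normed-2-h-torsion} provides. Likewise, the detour through $\1 \sslash 2 \wedge \H\Z/2 \wequi \NSym_{\H\Z/2}(S^1 \wedge \H\Z/2)$ and the precise identification $\alpha = \tau_0$ is more than is needed; it suffices that the image \emph{contains} $\tau_0$ and is stable under products and power operations.
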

\begin{proof}
By Lemma \ref{lemm:normed-2-h-torsion}, the unit map $\1 \to \1 \sslash 2$ extends over the cofiber of $2_\epsilon$, and hence by Lemma \ref{lemm:steenrod-action-moore-spectrum}, the image of $\H\Z/2_{**}(\1 \sslash 2) \to \H\Z/2_{**}\H\Z/2$ contains $\tau_0$. By construction, the image is a subring stable under power operations. The claim now follows from Theorem \ref{thm:bockstein-generates}.
\end{proof}

We thus arrive at the following result.
\begin{theorem} \label{thm:1-ss-2-splits}
Let $1/2 \in S$. Then $\1 \sslash 2$ splits into a wedge of shifts and twists of $\H\Z/2$.
\end{theorem}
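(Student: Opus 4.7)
The plan is to apply Theorem \ref{thm:detect-splitting} to $E = \1 \sslash 2$. I would first reduce to the base $S = \Spec(\Z[1/2])$: this scheme is connected, essentially smooth over a Dedekind domain (itself), and has residue fields $\Q, \F_p$ ($p$ odd) which are perfect of finite virtual $2$-étale cohomological dimension. The reduction is valid because both $\1 \sslash 2$ (being a pushout in $\NAlg(\SH)$ built from $\1$ and $\NSym(\1)$) and $\H\Z/2$ are stable under base change, using the appendix on base change; thus a splitting established over $\Z[1/2]$ pulls back to any $S$ with $1/2 \in \scr O(S)$.

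Next I would verify the three hypotheses of Theorem \ref{thm:detect-splitting}. Condition (1), connectivity and slice-connectivity, follows immediately from the pushout presentation of $\1 \sslash 2$ from $\1$ and $\NSym(\1) = \bigoplus_i D_i^\mot(\1)$, since each of these ingredients is connective and effective and the relevant pushout preserves these properties. Condition (3), the existence of a ring map $\1 \sslash 2 \to \H\Z/2$ inducing a surjection on $\H\Z/2_{**}$, is precisely the combined content of Lemma \ref{lemm:1-ss-2-versal} (applied to $x = 2$, $E = \H\Z/2$, which is characteristic $2$) and Lemma \ref{lemm:1-ss-2-surjection}.

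The substance is in condition (2). Here I would use Lemma \ref{lemm:1-ss-2-identification} to rewrite
\[ \1 \sslash 2 \wedge \H\Z/2 \wequi \NSym(S^1) \wedge \H\Z/2 \wequi \bigoplus_{i \ge 0} D_i^\mot(S^1) \wedge \H\Z/2, \]
and then invoke Proposition \ref{prop:free-normed-spectra}. That proposition provides both the summand-of-split-Tate-motive property uniformly in $i$, and a single $1 > q > 1/2$ such that every $D_i^\mot(S^1) \wedge \H\Z/2$ with $i \ge 1$ admits a vanishing line of slope $q$ through the origin. The remaining $i = 0$ summand is $\H\Z/2$, whose homotopy sheaf is located in the strict right half plane of the slope-$q$ line except for $\ul\pi_{0,0} = \Z/2$ on the line itself (by Proposition \ref{prop:HZ-dedekind}(4)). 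Combining these, the whole sum is connected with respect to the slope-$q$ line, as required.

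With all three conditions verified, Theorem \ref{thm:detect-splitting} delivers the desired decomposition. The main obstacle, and the whole reason the detection theorem is phrased the way it is, lies in condition (2): everything turns on the \emph{uniformity} in $i$ of the slope-$q$ vanishing line produced by Proposition \ref{prop:free-normed-spectra}, together with its compatibility with the homotopy of $\H\Z/2$ itself at the origin. All of the nontrivial Tate-motive and wreath-product combinatorics is already packaged in that earlier proposition, so the present argument is essentially a matter of assembly.
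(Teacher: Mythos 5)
Your proposal is correct and follows essentially the same route as the paper: reduce to $S=\Spec(\Z[1/2])$ via base change of free normed spectra, identify $\1\sslash 2\wedge\H\Z/2$ with $\NSym(S^1)\wedge\H\Z/2$ via Lemma \ref{lemm:1-ss-2-identification}, feed Proposition \ref{prop:free-normed-spectra} and Lemma \ref{lemm:1-ss-2-surjection} into Theorem \ref{thm:detect-splitting}. The only differences are cosmetic (you even make the verification of hypothesis (1) more explicit than the paper does).
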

\begin{proof}
Since the formation of free normed spectra commutes with arbitrary base change (see Corollary \ref{corr:free-normed-base-change}), we may assume that $S = \Spec(\Z[1/2])$; in particular we may assume that $S$ is connected.

We note that the algebra $\ul{\pi}_{**} \H\Z/2 = \ul{\pi}_{**}(D^0(S^1) \wedge \H\Z/2)$ is connected with respect to any line of slope $1 > q > 0$ through the origin, by Proposition \ref{prop:HZ-dedekind}(3). It thus follows from Proposition \ref{prop:free-normed-spectra} and \cite[Theorem 3.10]{bachmann-MGM} that $\H\Z/2 \wedge \NSym(S^1)$ is a summand of a split Tate motive, and that $\ul{\pi}_{**}(\H\Z/2 \wedge \NSym(S^1))$ is connected with respect to a certain line of slope $1 > q > 1/2$.
Combining this with Lemmas \ref{lemm:1-ss-2-identification} and \ref{lemm:1-ss-2-surjection} we find that Theorem \ref{thm:detect-splitting} applies. The result follows.
\end{proof}

\begin{remark} \NB{Is $\1 \sslash 2 \wequi \1 \sslash 2_\epsilon$?}
The same reasoning applies to show that $\1 \sslash 2_\epsilon$ splits into a wedge of shifts and twists of $\H\Z/2$: by Lemma \ref{lemm:1-ss-2-identification} we know that $\1 \sslash 2_\epsilon \wedge \H\Z/2 \wequi \NSym(S^1) \wedge \H\Z/2$, $\1 \sslash 2_\epsilon \to \H\Z/2$ induces a surjection on $\H\Z/2_{**}$ by the same argument as in Lemma \ref{lemm:1-ss-2-surjection}, and then we conclude again by Milnor-Moore as in Theorem \ref{thm:1-ss-2-splits}.
\end{remark}

\begin{subappendices}
\section{Some considerations in $\A^1$-algebraic topology}\label{sec:A1topology}
\label{app:topology}
Let $S$ be a base scheme. For $a \in \scr O^\times(S)$, we have the automorphism $\lra{a}: \P^1 \to \P^1, (x:y) \mapsto (ax: y) = (x: a^{-1}y)$. It stabilises to an automorphism of $\1 \in \SH(S)$ which we denote by the same name. Note that $\lra{1} = 1$.

\begin{lemma} \label{lemm:lra-squares}
Suppose that $a \in \scr O^\times(S)$. Then $\lra{a^2} \wequi \id \in [\P^1, \P^1]_{\Spc(S)}$. In particular $\lra{a^2} = 1 \in [\1, \1]_{\SH(S)}$.
\end{lemma}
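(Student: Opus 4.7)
The plan is to realise $\lra{a^2}$ as the action on $\P^1$ of an element of $\mathrm{SL}_2(S)$ that is connected to the identity by a finite chain of $\A^1$-homotopies through elementary matrices.

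First I would observe that on $\P^1$ we have $(ax : a^{-1}y) = (a^2 x : y)$ after rescaling both homogeneous coordinates by $a$. Hence $\lra{a^2}$ coincides with the action on $\P^1$ of the diagonal matrix
\[ D = \begin{pmatrix} a & 0 \\ 0 & a^{-1} \end{pmatrix} \in \mathrm{SL}_2(S). \]
Write $m : \mathrm{SL}_2 \times \P^1 \to \P^1$ for the natural action. If I can produce a morphism $H : \A^1 \to \mathrm{SL}_2$ over $S$ with $H(0) = e$ and $H(1) = D$ (possibly only as a finite concatenation of such), then the composite $m \circ (H \times \id_{\P^1}) : \A^1 \times \P^1 \to \P^1$ furnishes an $\A^1$-homotopy (or chain thereof) from $\id_{\P^1}$ to $\lra{a^2}$, giving the unstable claim.

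Second, I would produce this chain by expressing $D$ as a product of elementary matrices. The classical identity
\[ D = \begin{pmatrix} 1 & a \\ 0 & 1 \end{pmatrix} \begin{pmatrix} 1 & 0 \\ -a^{-1} & 1 \end{pmatrix} \begin{pmatrix} 1 & a \\ 0 & 1 \end{pmatrix} \begin{pmatrix} 0 & -1 \\ 1 & 0 \end{pmatrix} \]
combined with the further factorisation $\begin{pmatrix} 0 & -1 \\ 1 & 0 \end{pmatrix} = \begin{pmatrix} 1 & -1 \\ 0 & 1 \end{pmatrix}\begin{pmatrix} 1 & 0 \\ 1 & 1 \end{pmatrix}\begin{pmatrix} 1 & -1 \\ 0 & 1 \end{pmatrix}$ presents $D$ as a product of six elementary matrices over $S$. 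Each elementary matrix $I + s e_{ij}$ with $i \ne j$ admits the tautological linear $\A^1$-homotopy $t \mapsto I + t s e_{ij}$ to the identity; multiplying these six families together yields the required chain of $\A^1$-homotopies in $\mathrm{SL}_2(S)$ from $e$ to $D$, and hence the desired chain from $\id$ to $\lra{a^2}$ in $[\P^1, \P^1]_{\Spc(S)}$.

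The stable assertion $\lra{a^2} = 1 \in [\1, \1]_{\SH(S)}$ is then immediate by applying $\Sigma^\infty$. I do not expect any real obstacle here: the only computational input is the matrix factorisation of $D$, and the remaining ingredients are the standard $\A^1$-contraction of elementary matrices together with functoriality of the action map $m$.
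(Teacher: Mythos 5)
Your proposal is correct and follows essentially the same strategy as the paper: identify $\lra{a^2}$ with the action of the determinant-one diagonal matrix $\mathrm{diag}(a,a^{-1})$ on $\P^1$, factor that matrix into elementary matrices, and contract each elementary matrix to the identity by the tautological linear $\A^1$-homotopy. The only difference is cosmetic — the paper uses a four-term Whitehead-lemma factorisation where you use six elementary matrices via the rotation matrix — and your matrix identities check out.
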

\begin{proof}
The map $(x:y) \mapsto (a^2x:y)$ is equal to $(x:y) \mapsto (ax:a^{-1}y)$. We have a map $\GL_2(S) \to \Map(\P^1, \P^1)$. It is thus enough to connect the matrix $A = \begin{bmatrix}a^{-1} & 0 \\ 0 & a\end{bmatrix}$ to the identity matrix in $\GL_2(S)$ via $\A^1$-paths. By ``Whitehead's Lemma'', $A$ is a product of elementary matrices: \[ \begin{bmatrix}a^{-1} & 0 \\ 0 & a\end{bmatrix} = \begin{bmatrix}1 & 1/a \\ 0 & 1\end{bmatrix} \begin{bmatrix}1 & 0 \\ 1-a & 1\end{bmatrix}\begin{bmatrix}1 & -1 \\ 0 & 1\end{bmatrix}\begin{bmatrix}1 & 0 \\ 1-a^{-1} & 1\end{bmatrix}. \]
Since elementary matrices can be connected to the identity matrix by evident paths, the result follows.
\end{proof}

Similarly we prove the following.
\begin{lemma} \label{lemm:switch-lra}
The switch map on $T \wedge T$ is $\lra{-1}$.
\end{lemma}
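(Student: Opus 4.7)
The plan is to mirror the proof of Lemma \ref{lemm:lra-squares}. Using the motivic equivalence $T \wedge T \simeq \A^2/(\A^2 \setminus 0)$, the standard left action of $GL_2$ on $\A^2$ preserves $\A^2 \setminus 0$ and therefore descends to a monoid homomorphism $GL_2(S) \to [T \wedge T, T \wedge T]_{\Spc(S)}$. Under this homomorphism, the permutation matrix $\sigma = \begin{pmatrix}0 & 1\\ 1 & 0\end{pmatrix}$ induces the switch map, while the diagonal matrix $D = \begin{pmatrix}-1 & 0 \\ 0 & 1\end{pmatrix}$ induces $\lra{-1} \wedge \id$, which equals $\lra{-1}$ on $T \wedge T$.

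Thus it suffices to show that $\sigma$ and $D$ lie in the same $\A^1$-path component of $GL_2(S)$, or equivalently that
\[
\sigma D^{-1} = \begin{pmatrix}0 & 1 \\ -1 & 0\end{pmatrix} \in SL_2(S)
\]
is $\A^1$-homotopic to the identity. One verifies by direct multiplication that
\[
\begin{pmatrix}0 & 1 \\ -1 & 0\end{pmatrix} = \begin{pmatrix}1 & 1 \\ 0 & 1\end{pmatrix}\begin{pmatrix}1 & 0 \\ -1 & 1\end{pmatrix}\begin{pmatrix}1 & 1 \\ 0 & 1\end{pmatrix},
\]
exhibiting it as a product of elementary matrices. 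Since each elementary matrix is $\A^1$-homotopic to the identity via the evident linear path in its off-diagonal entry, the product is as well, which yields the claim.

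The only potentially subtle point is the compatibility of the $GL_2$-action under the identification $T \wedge T \simeq \A^2/(\A^2 \setminus 0)$ with (i) the switch map and (ii) the action of $\lra{-1}$ on one smash factor. Both are standard: the switch on $T \wedge T$ is induced by the coordinate swap on $\A^2$, and $\lra{-1}$ on $T$ is induced by multiplication by $-1$ on $\A^1$, hence on a single coordinate of $\A^2$. With this identification in hand, the rest of the argument is a direct matrix computation following the same template as Lemma \ref{lemm:lra-squares}.
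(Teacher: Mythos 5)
Your proof is correct and follows essentially the same strategy as the paper: identify the switch with the permutation matrix acting on $\A^2/(\A^2\setminus 0)$, identify $\lra{-1}\wedge\id$ with $\mathrm{diag}(-1,1)$, and connect the two by elementary matrices, each of which is $\A^1$-homotopic to the identity. The matrix computations check out, so there is nothing to add.
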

\begin{proof}
The switch map corresponds to the matrix $\begin{pmatrix} 0 & 1 \\ 1 & 0 \end{pmatrix}$. Adding the first column to the second, and then subtracting the second row from the first, then the second column from the first, we obtain $\begin{pmatrix} -1 & 0 \\ 0 & 1 \end{pmatrix}$. This corresponds to $\lra{-1} \wedge \id$. Clearly elementary row operations are homotopic to the identity. The result follows.
\end{proof}

Let 
\[
n_\epsilon = 1 + \lra{-1} + \dots + \lra{\pm 1},
\]
 where the sum consist of $n$ terms.
\begin{lemma} \label{lemm:powering-n-epsilon}
The pointed map $p_n: \Gm \to \Gm, x \mapsto x^n$ induces $\bar{p}_n = \Sigma^\infty(p_n) \wedge \Gm^{-1}: \1 \to \1 \in \SH(S)$ and satisfies $\bar{p}_n \wequi n_\epsilon$.
\end{lemma}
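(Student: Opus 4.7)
I would prove this by induction on $n \geq 0$. The base cases are immediate: for $n = 0$, $p_0$ is the pointed-constant map at the basepoint $1 \in \Gm$, so $\Sigma^\infty p_0 = 0 = 0_\epsilon$; and for $n = 1$, $p_1 = \id_\Gm$, so $\bar p_1 = 1 = 1_\epsilon$.

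For the inductive step $n \geq 2$, I would exploit the factorization $p_n = \mu \circ (\id \times p_{n-1}) \circ \Delta : \Gm \to \Gm$, where $\mu : \Gm \times \Gm \to \Gm$ is the group multiplication and $\Delta : \Gm \to \Gm \times \Gm$ is the diagonal. Writing $G := \Sigma^\infty \Gm$ and using the stable Künneth splittings $\Sigma^\infty_+ \Gm \wequi \1 \oplus G$ and $\Sigma^\infty_+(\Gm \times \Gm) \wequi \1 \oplus G^{\oplus 2} \oplus G \wedge G$, together with the observation that $\mu$ restricted to each axis $\Gm \times \{1\}$ and $\{1\} \times \Gm$ is the identity on $\Gm$, a direct diagrammatic computation yields the recursive identity
\[ \bar p_n = 1 + \bar p_{n-1} + \bar p_{n-1} \cdot \alpha = 1 + \bar p_{n-1}(1 + \alpha) \in [\1,\1]_{\SH(S)}, \]
where $\alpha := \bar\mu \circ \delta \in [\1,\1]$ is the composition of the reduced diagonal $\delta : G \to G \wedge G$ with the reduced multiplication $\bar\mu : G \wedge G \to G$ (using $\otimes$-invertibility of $G$ to identify $[G,G] = [\1,\1]$). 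The key identity, equivalent to the $n = 2$ case, is $\alpha = \epsilon := \lra{-1} - 1$. Granted this, $\bar p_n = 1 + \bar p_{n-1} \cdot \lra{-1}$ (since $1 + \epsilon = \lra{-1}$), and applying the inductive hypothesis $\bar p_{n-1} = (n-1)_\epsilon = \sum_{i=0}^{n-2} \lra{(-1)^i}$ yields
\[ \bar p_n = 1 + \sum_{i=0}^{n-2} \lra{(-1)^{i+1}} = \sum_{j=0}^{n-1} \lra{(-1)^j} = n_\epsilon. \]

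The main obstacle is establishing $\alpha = \epsilon$. This identity captures genuinely motivic content: classically $\alpha = 0$, since the reduced diagonal is nullhomotopic for suspension spectra of ordinary spheres. Commutativity of $\mu$ (so $\bar\mu \circ \tau = \bar\mu$) combined with Lemma \ref{lemm:switch-lra} (from which one deduces that the switch on $\Gm \wedge \Gm \simeq S^{2,2}$ is $-\lra{-1}$, via a block-permutation bookkeeping using that the switch on $S^1 \wedge S^1$ is $-1$) yields only the weaker relation $\alpha \cdot 2_\epsilon = 0$, which is satisfied by $\epsilon$ but does not pin $\alpha$ down. To establish $\alpha = \epsilon$ outright, I would reduce by naturality to $S = \Spec k$ for a perfect field $k$, invoke Morel's identification $[\1,\1]_{\SH(k)} = \mathrm{GW}(k)$, and appeal to the classical identification of the degree of the squaring map on $\Gm$ with the hyperbolic form $2_\epsilon = 1 + \lra{-1}$; alternatively, an explicit $\A^1$-homotopy in the style of Lemmas \ref{lemm:lra-squares} and \ref{lemm:switch-lra}, using elementary-matrix decompositions of $\GL_2$-actions on $\P^1$, could yield a more self-contained argument.
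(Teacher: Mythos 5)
The paper itself does not prove this lemma; it simply cites \cite[Proposition 2.1]{bachmann-bott}, so there is no internal argument to compare against. Your reduction is the standard skeleton of the known proof and is carried out correctly: the factorization $p_n = \mu\circ(\id\times p_{n-1})\circ\Delta$, the splitting $\Sigma^\infty_+(\Gm\times\Gm)\wequi \1\oplus G\oplus G\oplus G\wedge G$, and the resulting recursion $\bar p_n = 1+\bar p_{n-1}(1+\alpha)$ with $\alpha=\bar\mu\circ\delta$ are all right, and granting $\alpha=\lra{-1}-1$ the induction closes cleanly.

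The gap is in the one step that carries all the content. Since $\bar p_2 = 2+\alpha$, the identity $\alpha=\lra{-1}-1$ \emph{is} the statement that the squaring map on $\Gm$ has degree $1+\lra{-1}$; appealing to ``the classical identification of the degree of the squaring map on $\Gm$ with the hyperbolic form'' is therefore circular --- you are citing the $n=2$ case of the lemma to prove the $n=2$ case of the lemma. The alternative you gesture at (an explicit $\A^1$-homotopy in the style of Lemmas \ref{lemm:lra-squares} and \ref{lemm:switch-lra}) is the honest route but is not carried out, and it is not a two-line matrix manipulation: $p_2$ is not induced by a linear action on $\P^1$, so one needs a genuinely different construction (e.g.\ Morel's computation via the pinch map on $\P^1\wequi\A^1/\Gm$, or Cazanave's/Morel's naive-homotopy computation of the degree of $x\mapsto x^2$). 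Separately, the reduction ``by naturality to $S=\Spec k$ for a perfect field'' runs in the wrong direction for a general base: validity over all residue fields does not formally imply validity over $S$. The correct move is the opposite one --- both $\bar p_n$ and $n_\epsilon$ are pulled back along $S\to\Spec(\Z)$, so it suffices to prove the identity in $[\1,\1]_{\SH(\Z)}$; but there Morel's isomorphism $[\1,\1]\wequi\mathrm{GW}(k)$ is not directly available, so you would additionally need an injectivity statement such as $[\1,\1]_{\SH(\Z)}\hookrightarrow[\1,\1]_{\SH(\Q)}$, or else an argument (like the one in the cited reference) that works over an arbitrary base from the start.
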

\begin{proof}
See \cite[Proposition 2.1]{bachmann-bott}.
\end{proof}

\begin{lemma} \label{lemm:trace-of-Zi}
Let $1/2 \in S$ and put $S' = S[T]/(T^2 + 1)$. Then $\tr_{S'/S}(1) = \lra{2}2_\epsilon$.\NB{We have $\lra{a}2_\epsilon = 2_\epsilon$, by Lemma 4.3 in BEHKSY.}
\end{lemma}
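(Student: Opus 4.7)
The plan is to identify $\tr_{S'/S}(1) \in \pi_0[\1,\1]_{\SH(S)}$ with the class of the trace bilinear form of $\scr O(S')$ as a rank-$2$ locally free $\scr O(S)$-module, and then compute that form directly. For finite étale covers this identification of the transfer of the unit with the (class of the) trace bilinear form is a standard motivic result, due essentially to Morel over fields and extended to general bases in subsequent work; I would invoke whichever such reference the paper has in mind.

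Granted the identification, take $\{1, T\}$ as an $\scr O(S)$-basis for $\scr O(S')$. Since multiplication by $T$ has matrix $\bigl(\begin{smallmatrix} 0 & -1 \\ 1 & 0 \end{smallmatrix}\bigr)$, its classical trace vanishes; and since $T^2 = -1$, multiplication by $T^2$ has classical trace $-2$. Hence the Gram matrix of the trace bilinear form is
\[
\begin{pmatrix} \tr(1\cdot 1) & \tr(1\cdot T) \\ \tr(T \cdot 1) & \tr(T\cdot T) \end{pmatrix} = \begin{pmatrix} 2 & 0 \\ 0 & -2 \end{pmatrix},
\]
representing $\lra{2} + \lra{-2}$. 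Factoring via $\lra{-2} = \lra{2}\lra{-1}$ yields $\lra{2}(1 + \lra{-1}) = \lra{2}\cdot 2_\epsilon$, as desired.

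The main obstacle is to cite or re-prove the trace-equals-trace-form identification in a way compatible with the paper's framework. If a clean reference is not at hand, a self-contained alternative is via the Pontryagin--Thom collapse for the regular closed immersion $S' \hookrightarrow \A^1_S$ cut out by $f(T) = T^2 + 1$: the conormal bundle is trivialized by $df = 2T\,dT$, and since $T^2 = -1$ is a unit on $S'$, one can track how the factor $2T$ contributes $\lra{2}$ (using Lemma \ref{lemm:lra-squares} to absorb $\lra{T^2} = \lra{-1}$) while the two Galois-conjugate roots $T = \pm\sqrt{-1}$ together contribute the $2_\epsilon = 1 + \lra{-1}$ factor. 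Either route lands on the same answer, but the trace-form route is considerably shorter.
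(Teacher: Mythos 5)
Your endgame is right: the trace form of $S' = S[T]/(T^2+1)$ over $S$ in the basis $\{1,T\}$ has Gram matrix $\mathrm{diag}(2,-2)$, and $\lra{2}+\lra{-2} = \lra{2}(1+\lra{-1}) = \lra{2}2_\epsilon$ matches the claimed answer. But the load-bearing step — that $\tr_{S'/S}(1) \in [\1,\1]_{\SH(S)}$ is computed by the class of the trace bilinear form — is precisely what you leave as an unspecified citation, and it is essentially the entire content of the lemma; the arithmetic that follows is trivial. The references you allude to (Morel, and Hoyois's identification of the motivic transfer with the Scharlau transfer of the field trace) are stated for finite separable extensions of \emph{fields}, whereas the lemma is asserted for an arbitrary base $S$ with $1/2 \in \scr O(S)$. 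To make your route rigorous you would need to add a reduction: $S'/S$ is pulled back from $\Z[1/2][i]/\Z[1/2]$ and transfers commute with base change, so one may take $S = \Spec(\Z[1/2])$; one then still needs to pass from $\Spec(\Z[1/2])$ to $\Spec(\Q)$, e.g.\ via injectivity of $[\1,\1]_{\SH(\Z[1/2])} \to [\1,\1]_{\SH(\Q)}$ (which itself rests on the identification of $\pi_{0,0}(\1)$ with $\mathrm{GW}$ over a Dedekind base). None of this is in your write-up, and your fallback sketch via the Pontryagin--Thom collapse for $S' \hookrightarrow \A^1_S$ is too vague to close the gap (in particular, "the two Galois-conjugate roots contribute $2_\epsilon$" presumes the cover splits, which it does not over a general $S$).

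For comparison, the paper avoids the trace-form identification altogether. It applies Hoyois's quadratic Lefschetz fixed-point formula to the endomorphism $f: \P^1_S \to \P^1_S$, $(x:y) \mapsto (-x^3:y^3)$, whose fixed locus is $0 \sqcup \infty \sqcup S'$ with $\id - i^*df$ invertible (here $1/2 \in S$ is used), giving $\tr(\Sigma^\infty_+ f) = 1 + 1 + \tr_{S'/S}(\lra{-2})$; it then computes the same categorical trace directly as $1 + 3_\epsilon$ using $\Sigma^\infty\P^1_+ \wequi \1 \vee \P^1$, additivity of traces, and Lemma \ref{lemm:powering-n-epsilon}, and solves for $\tr_{S'/S}(1)$. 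That argument works uniformly over any $S$ with $1/2 \in \scr O(S)$ with no reduction to fields. Your approach would be shorter \emph{if} a base-scheme version of the trace-form identification were available to cite, but as written the key step is assumed rather than proved.
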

\begin{proof}
Let $f: \P^1_S \to \P^1_S$ be given by the map informally described as $(x:y) \mapsto (-x^3:y^3)$. Let $Z$ be its fixed locus. We shall apply the Lefschetz fixed point formula \cite[Theorem 1.3]{hoyois2015quadratic} to $f$. This will give us a computation of the categorical trace $\tr(\Sigma^\infty_+ f)$ of $\Sigma^\infty_+ f$. Then we shall compute that trace directly, and deduce the result by comparison.

We first check that the Lefschetz fixed point formula applies. Note that in the affine patch $y=1$, $f$ corresponds to the map $x \mapsto -x^3$, and in the affine patch $x=1$ it corresponds to $y \mapsto -y^3$. The fixed locus in the first patch is $S[T]/(T + T^3)$ which is the disjoint union of $0 \wequi S$ and $S'$, both of which are smooth. Hence the entire fixed locus is $Z \wequi 0 \coprod \infty \coprod S'$, which is in particular smooth. Let $i: Z \hookrightarrow \P^1_S$ be the inclusion. The map $f: \P^1 \to \P^1$ induces $df: \Omega_{\P^1} \to \Omega_{\P^1}$. Let $N_i$ be the conormal bundle; we have an injection $N_i \hookrightarrow i^* \Omega_{\P^1}$ and $df$ restricts to $i^* df: N_i \to N_i$. We need to check that $\id - i^* df$ is an isomorphism. By direct computation we find that $i^*df$ is given by $0$ on $0, \infty$ and by $3$ on $S'$. Since $1/2 \in S$, $\id - i^*df$ is an isomorphism and the Lefschetz fixed point theorem applies. It tells us that \[ \tr(\Sigma^\infty_+ f) = 1 + 1 + \tr_{S'/S}(\lra{-2}). \]

Now we compute $\tr(\Sigma^\infty_+ f)$ directly. We have $\Sigma^\infty \P^1_+ \wequi \1 \vee \P^1$, and $\sigma^\infty_+ f$ corresponds to the diagonal map $\id \vee \Sigma^\infty f$ (this makes sense since $f$ is a pointed map, provided we point $\P^1$ at $\infty$ as usual). It follows from \cite[Lemma 1.7]{may2001additivity} that $\tr(\Sigma^\infty_+ f) = \tr(\id_{\1}) + \tr(\Sigma^\infty f)$. Clearly $\tr(\id_{\1}) = 1$. Denote the canonical isomorphism $[\P^1, \P^1] \to [\1, \1]$ (inverse to smashing with $\id_{\P^1}$) by $D$. By \cite[Proposition 4.14]{dugger2014coherence} we have $\tr(\Sigma^\infty f) = \tr(\id_{\P^1}) D(\Sigma^\infty f)$. It follows from Lemma \ref{lemm:powering-n-epsilon} that $D(\Sigma^\infty f) = \lra{-1} 3_\epsilon$, and from \cite[Proposition 4.21]{dugger2014coherence} (together with Lemma \ref{lemm:switch-lra}) that $\tr(\id_{\P^1}) = \lra{-1}$. Hence we obtain \[ \tr(\Sigma^\infty_+ f) = 1 + 3_\epsilon. \]

Comparing with the previous expression, and using that $\tr_{S'/S}(\lra{-2}) = \lra{-2}\tr_{S'/S}(1)$, we obtain the desired result.
\end{proof}

We also have the following observation.
\begin{lemma} \label{lemm:pi1-RP2}
\begin{enumerate}
\item The inclusion $\Gm \wequi \R\P^1 \hookrightarrow \R\P^2$ factors over the cofiber of $2_\epsilon$.
\item Let $S$ be essentially smooth over a Dedekind domain.
  Then there is a non-zero unique class $u_0 \in H^{1,1}(\Gm/2_\epsilon, \Z/2)$ which restricts to $0$ along $* \to \Gm/2_\epsilon$.
  We have $H^{**}(\Gm/2_\epsilon, \Z/2)\{1, u_0, v_0\}$ where $v_0 = \Sq^1(u_0)$.
  The induced map \[ H^{**}\fpsr{u,v}/u^2 = \tau v + \rho u \wequi H^{**}(\R\P^\infty, \Z/2) \to H^{**}(\Gm/2_\epsilon, \Z/2) \] sends $u$ to $u_0$ and $v$ to $v_0$.
\end{enumerate}
\end{lemma}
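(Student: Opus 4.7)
For part (1), the plan is to identify motivic $\R\P^2$ as a two-cell complex built on $\R\P^1 \wequi \Gm$ and to pin down the attaching map of its top cell. Viewing $\R\P^2$ as the 2-skeleton of the standard motivic cell structure on $\R\P^\infty$, there is a cofiber sequence
\[ \Gm \wequi \R\P^1 \hookrightarrow \R\P^2 \to T, \]
so that, after rotating, the attaching map desuspends (by $T = S^{2,1}$) to a self-map of $\Gm$. The heart of the argument is identifying this self-map with $2_\epsilon$. By Lemma \ref{lemm:powering-n-epsilon} the motivic degree of the squaring map $\Gm \to \Gm$ is $2_\epsilon$, which is the correct motivic refinement of the classical degree-$2$ attaching map of $\R\P^2$. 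The claimed factorization then follows formally from the universal property of the cofiber of $2_\epsilon$.

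For part (2), I would feed the defining cofiber sequence $\Gm \xrightarrow{2_\epsilon} \Gm \to \Gm/2_\epsilon \to \Sigma \Gm$ through $\H\Z/2$-cohomology. The key input is that $\H\Z/2$ is oriented, so $\lra{-1}$ acts as the identity on $\H\Z/2$-cohomology; hence $2_\epsilon = 1 + \lra{-1}$ acts as $2 = 0$. The resulting long exact sequence therefore breaks into short exact sequences, yielding $H^{**}(\Gm/2_\epsilon, \Z/2)$ as a free $H^{**}$-module on classes $1, u_0, v_0$, where $u_0 \in H^{1,1}$ lifts the generator of $\tilde H^{**}(\Gm)$ and $v_0 \in H^{2,1}$ is the image of the generator of $\tilde H^{**}(\Sigma \Gm)$. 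The uniqueness of $u_0$ among non-zero classes in $H^{1,1}$ restricting trivially along the basepoint is then immediate from this description.

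For the relation $v_0 = \Sq^1(u_0)$, I would note that the Bockstein associated to the $2_\epsilon$-cofiber is by construction the connecting map in cohomology; since $\H\Z$ is oriented this $2_\epsilon$-Bockstein agrees with the classical mod-$2$ Bockstein, which on $\H\Z/2$-cohomology is $\Sq^1$. Finally, under the induced map $H^{**}(\R\P^\infty, \Z/2) \to H^{**}(\Gm/2_\epsilon, \Z/2)$ coming from (1), the class $u$ lies in $H^{1,1}$ and restricts to $0$ at the basepoint by naturality, so by the uniqueness above $u \mapsto u_0$; then $v = \Sq^1(u)$ in $H^{**}(\R\P^\infty)$ (via Lemma \ref{lemm:action-on-Bsigma2}) forces $v \mapsto \Sq^1(u_0) = v_0$ by naturality of $\Sq^1$.

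The main obstacle is expected to be part (1): explicitly identifying the attaching map of the $(2,1)$-cell of $\R\P^2$ with $2_\epsilon$. One clean route is to realize $\R\P^2$ as a suitable quotient of $\A^n \setminus 0$ by $\mu_2$ (or as a Thom space over $\R\P^\infty$) and compute the attaching map geometrically, using Lemma \ref{lemm:powering-n-epsilon} to recognize the result as the motivic degree of the squaring map. Once that identification is secured, the remainder of the argument—the cohomology computation and the naturality tracking of $u$ and $v$—is essentially formal.
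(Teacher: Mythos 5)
Your part (2) follows essentially the paper's route: kill $2_\epsilon$ on $\H\Z/2$ (the paper deduces this from Lemma \ref{lemm:normed-2-h-torsion} rather than orientability, but either works), split the resulting decomposition, identify the Bockstein, and use uniqueness of $u_0$ plus naturality of $\Sq^1$. Two points are elided, though: the uniqueness of $u_0$ is not ``immediate'' from freeness alone --- it needs $H^{0,0}=\Z/2$ and $H^{-1,0}=0$ over the stated bases (Proposition \ref{prop:HZ-dedekind}) --- and before you can invoke that uniqueness to conclude $u\mapsto u_0$ you must check that $i^*(u)\ne 0$; the paper does this by restricting to $\R\P^1$ and checking under complex realization.

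Part (1) is where there is a genuine gap. You reduce everything to identifying the attaching map of the top cell of $\R\P^2$ with $2_\epsilon$, but you never carry out that identification: you assert that $2_\epsilon$ is ``the correct motivic refinement of the classical degree-$2$ attaching map'' and defer the actual computation to an unspecified geometric argument. That identification \emph{is} the nontrivial content here --- classically the degree is $2$, but motivically the attaching map could a priori be $2$, $2_\epsilon$, or some other sum of units $\lra{a}+\lra{b}$, and distinguishing these is exactly the point; your route would moreover also require justifying the cofiber sequence $\R\P^1\to\R\P^2\to T$ and (implicitly) proves the stronger claim $\R\P^2\wequi\Gm/2_\epsilon$. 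The paper sidesteps all of this with a much softer argument: in the square formed by the inclusions $\A^1\setminus 0\xrightarrow{j}\A^2\setminus 0$ and $\R\P^1\xrightarrow{i}\R\P^2$ with vertical quotient maps, the left vertical map $p$ is the squaring map under $\R\P^1\wequi\Gm$, hence equals $2_\epsilon$ stably by Lemma \ref{lemm:powering-n-epsilon}; and $j\colon x\mapsto(x,0)$ is null-homotopic inside $\A^2\setminus 0$ via the explicit $\A^1$-homotopies $(x,t)\mapsto(x,t)$ and $(x,t)\mapsto((1-t)x,1)$. Therefore $i\circ 2_\epsilon\simeq i\circ p\simeq 0$ and the factorization through $\cofib(2_\epsilon)$ is immediate, with no cell structure or attaching-map identification needed. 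You should either adopt such a null-homotopy argument or actually supply the computation of the attaching map that your outline presupposes.
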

\begin{proof}
(1) Consider the commutative diagram
\begin{equation*}
\begin{CD}
\A^1 \setminus 0 @>j>> \A^2 \setminus 0 \\
@VpVV                   @VVV           \\
\R\P^1           @>i>> \R\P^2,
\end{CD}
\end{equation*}
where the horizontal maps are the canonical inclusions and the vertical maps are the canonical quotients. Under the isomorphism $\Gm \wequi \R\P^1$ the map $p$ corresponds to the squaring map, i.e. induces $2_\epsilon$ in (stable) homotopy (by Lemma \ref{lemm:powering-n-epsilon}). We thus need to prove that $ip$ is null homotopic, for which it suffices to show that $j$ is null homotopic. The map $j$ is given by $x \mapsto (x, 0)$. We can first apply the homotopy $(x, t) \mapsto (x, t)$ to homotope it to $x \mapsto (x, 1)$, and then apply $(x, t) \mapsto ((1-t)x, 1)$ to homotope it to $x \mapsto (0, 1)$. This concludes the proof.

(2) Since $2_\epsilon$ vanishes in $\H\Z/2_{**}$ (e.g. by Lemma \ref{lemm:normed-2-h-torsion}) we have \[ \Sigma^\infty_+(\Gm/2_\epsilon) \wedge \H\Z/2 \wequi \H\Z/2 \vee \Sigma^{1,1} \H\Z/2 \vee \Sigma^{2,1}\H\Z/2, \] and the Bockstein $\Sq^1: H^{1,1}(\Gm/2_\epsilon) \to H^{2,1}(\Gm/2_\epsilon)$ is non-zero.
Uniqueness of the generator in degree $(1,1)$ (subject to vanishing in $*$) follows from Proposition \ref{prop:HZ-dedekind}(2) (i.e. the fact that motivic cohomology is understood in weight $1$).
We claim that $\Sq^1(u_0)$ generates the term in degree $(2,1)$.
For this we may assume that $S=\Spec(\Z)$.
Under this assumption, Lemma \ref{lem:ptm}(1) shows that $H^{2,1}(\Gm/2_\epsilon) \wequi \F_2$, so any non-zero element generates; this proves the claim.

The cohomology of $\R\P^\infty$ is determined in e.g. Lemma \ref{lemm:coh-of-BSigma2}.
It remains to determine the map $i^*: H^{**}(\R\P^\infty) \to H^{**}(\Gm/2_\epsilon)$.
By construction one has $\Sq^1(u) = v$ and $u|_* = 0$ \cite[Lemma 10.6]{spitzweck2012commutative}.
It thus suffices to show that $i^*(u) \ne 0$.
This follows from the fact that $u|_{\R\P^1} \ne 0$, as can be verified e.g. using complex realization.
\end{proof}

\section{Base change of normed spectra}\label{sec:basechange}
Recall the notation $Q(\FEt, Z)$ from \S\ref{subapp:kan-extension}.
Also recall the category $\NAlg_\scr{C}(\scr D)$ of $\scr C$-normed objects in $\scr D$ from \cite[Definition 7.1]{norms}.

\begin{proposition} \label{prop:norms-extension-abstract}
Let $S$ be a scheme and $\scr C \subset_\fet \scr C' \subset_\fet \Sch_S$. Assume that for every $Z \in \scr C$ the stack $Q(\FEt, Z) \in \PSh(\scr C')$ is left Kan extended from its restriction to $\scr C$. Let $\scr D$ be a presentably normed $\infty$-category over $S$. Then the canonical forgetful functor $\NAlg_{\scr C'}(\scr D) \to \NAlg_{\scr C}(\scr D)$ is an equivalence.
\end{proposition}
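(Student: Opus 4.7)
The plan is to unwind the definition of $\NAlg_\scr{C}(\scr D)$ and $\NAlg_{\scr C'}(\scr D)$ in the Bachmann--Hoyois formalism as cocartesian sections of the norm fibration $\scr D^\otimes$ over an appropriate span category, and then reduce the equivalence to a Kan extension statement that directly invokes the hypothesis on $Q(\FEt, Z)$. The forgetful functor in question is induced by restriction along the inclusion $\Span(\scr C, \all, \fet) \hookrightarrow \Span(\scr C', \all, \fet)$.

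First, I would reformulate both sides in terms of mapping spaces out of the presheaves $Q(\FEt, -)$. Using that $\scr D$ is presentably normed, a $\scr C$-normed object in $\scr D$ should correspond, via a Yoneda-type universal property, to a compatible system of colimit-preserving maps out of $Q(\FEt, Z)|_\scr{C}$ for $Z \in \scr C$. The analogous description holds for $\scr C'$, and the forgetful functor is then precomposition with the restriction map $Q(\FEt, Z)|_{\scr C'} \to Q(\FEt, Z)|_\scr{C}$ at each $Z \in \scr C$.

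The core step is the left Kan extension argument. Since $Q(\FEt, Z)|_{\scr C'}$ is by hypothesis the left Kan extension of $Q(\FEt, Z)|_\scr{C}$, the universal property of left Kan extension identifies natural transformations out of $Q(\FEt, Z)|_{\scr C'}$ (in any target presheaf category) with natural transformations out of the restriction to $\scr C$. This gives full faithfulness of the forgetful functor. Essential surjectivity follows by running the same argument in reverse: given a $\scr C$-normed object, we left Kan extend the associated maps to obtain the data required over $\scr C'$, and the resulting maps automatically satisfy the coherences by the same universal property.

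The main obstacle will be setting up the correspondence between normed objects and $Q(\FEt, -)$-mapping data with sufficient care, since a normed object carries substantial higher coherence data (distributivity, compatibility with composition in the span category, and so on). The presentability of $\scr D$ is essential here to ensure that the relevant colimits commute past the normed structure, so that the left Kan extension argument applies not just to the basic norm maps but at every level of coherence simultaneously.
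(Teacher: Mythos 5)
There is a genuine gap at the heart of your argument: the claimed ``Yoneda-type universal property'' identifying a $\scr C$-normed object with ``a compatible system of colimit-preserving maps out of $Q(\FEt, Z)|_{\scr C}$'' is not something that exists in the Bachmann--Hoyois formalism, and it is not established (or even precisely formulated) in your proposal. A normed object is a section of the normed fibration over $\Span(\scr C, \all, \fet)$ that is cocartesian over backward morphisms; it is not presented as a map out of $Q(\FEt, Z)$ in any presheaf category. The presheaf $Q(\FEt, Z)$ records the spans $Z \leftarrow Y \to X$ with $Y \to X$ finite \'etale, i.e.\ essentially the mapping spaces of the span category out of $Z$, and this is how it enters the actual argument --- not through a universal property for maps out of it. You acknowledge that ``setting up the correspondence \dots\ with sufficient care'' is the main obstacle, but that obstacle is the entire content of the proposition; once it is deferred, the remaining two steps (full faithfulness and essential surjectivity via the universal property of left Kan extension) are formal and do not touch the coherence data of normed objects.

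For comparison, the paper's proof proceeds as follows. The restriction functor is conservative, so it suffices to show its left adjoint is fully faithful. One embeds $\NAlg_{\scr C}(\scr D)$ into the full section category $\Sect(\Span(\scr C,\all,\fet), \scr D)$, where the restriction functor has a fully faithful left adjoint $L$ given by relative left Kan extension; the real issue is showing that $L$ preserves the subcategory of normed objects, i.e.\ that the Kan-extended section is still cocartesian over backward morphisms. By the pointwise formula for $L$ this reduces to a cofinality statement, and by Quillen's theorem B to the weak contractibility of the comma category
\[ \scr E = (\scr C^\op)_{/X} \times_{\Span(\scr C,\all,\fet)_{/X}} (\Span(\scr C,\all,\fet)_{/X})_{(Z\leftarrow Y\to X)/}. \]
This $\scr E$ is identified with the category of elements of the fiber $F'$ of $F := Q(\FEt,Z)(-)\colon (\scr C^\op)_{/X} \to \Spc$ over the chosen point of $Q(\FEt,Z)(X)$; its groupoid completion is $\colim F'$, and by universality of colimits in $\Spc$ this is contractible precisely when $\colim_{(\scr C^\op)_{/X}} F \to Q(\FEt,Z)(X)$ is an equivalence --- which is exactly the left Kan extension hypothesis. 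So the hypothesis is consumed as a colimit formula over slice categories feeding a cofinality argument, not as a mapping-out universal property. To repair your proposal you would need to either carry out this section-category analysis or actually prove the reformulation you posit, and the latter is not known to hold in the form you state it.
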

\begin{proof}
The proof is essentially the same as that of \cite[Proposition 7.6(6)]{norms}.
For the convenience of the reader we briefly recall the argument.
The restriction $\NAlg_{\scr C'}(\scr D) \to \NAlg_{\scr C}(\scr D)$ is conservative, and so it suffices to prove that its left adjoint is fully faithful.
Recalling that normed spectra embed into a category of sections, one notes that for formal reasons the restriction functor on the section categories admits a fully faithful left adjoint $L$.
It thus suffices to prove that $L$ preserves the subcategories of normed spectra.
Using an explicit formula for $L$ (as a relative left Kan extension), one reduces to proving that a certain inclusion of indexing categories is cofinal.
Using Quillen's theorem B, we finally arrive at the following: we seek to prove that \[ \scr E := (\scr C^\op)_{/X} \times_{\Span(\scr C,\all,\fet)_{/X}} (\Span(\scr C,\all,\fet)_{/X})_{(Z\stackrel f\leftarrow Y\stackrel p\to X)/} \] is a weakly contractible category, for all $X \in \scr C', Z \in \scr C$ and $Y \in \scr C'$ finite étale over $X$. Consider the functor $F: \scr C_0 := (\scr C^\op)_{/X} \to \Spc, X_0 \mapsto Q(\FEt, Z)(X_0)$. There is a natural transformation $F \Rightarrow Q(\FEt, Z)(X)$ (to a constant functor). Form the cartesian square
\begin{equation*}
\begin{CD}
F' @>>> F \\
@VVV  @VVV \\
\{Z\stackrel f\leftarrow Y\stackrel p\to X\} @>>> Q(\FEt, Z)(X).
\end{CD}
\end{equation*}
By inspection, $\scr E$ is the category of elements (or Grothendieck construction, lax-slice category, unstraightening) $Gr(F')$. $\scr E$ being weakly contractible means that its nerve or \emph{groupoid completion} $\scr E^{gpd} \wequi * \in \Spc$. For any functor $G: \scr C_0 \to \Spc$ we have $Gr(G)^{gpd} \wequi \colim_{\scr C_0} G$ (this follows from \cite[Corollary 3.3.4.3]{HTT}). Our task is thus to show that $\colim_{\scr C_0} F' \wequi *$. By universality of colimits (in $\Spc$) for this it is enough to show that the canonical map $\colim_{\scr C_0} F \to Q(\FEt, Z)(X)$ is an equivalence. This is exactly the assumption that $Q(\FEt, Z) \in \PSh(\scr C')$ is obtained by left Kan extension from its restriction to $\scr C$.
\end{proof}

\def\Et{\cat{E}\mathrm{t}}
\begin{lemma} \label{lemm:NAlg-sheaf}
Suppose that $S$ is a scheme, $\scr D$ a presentably normed $\infty$-category over $S$ satisfying Zariski (respectively Nisnevich) descent, and $\scr C \subset_\fet \Sch_S$.
Then $\NAlg_{\scr C}(\scr D(\ph))$\NB{notation a bit sloppy} satisfies Zariski (respectively Nisnevich) descent.
\end{lemma}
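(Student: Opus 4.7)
The plan is to exploit the description of $\NAlg_{\scr C}(\scr D(U))$ (recalled in \cite[\S7]{norms} and used in the proof of Proposition~\ref{prop:norms-extension-abstract}) as a full subcategory of sections of a cocartesian fibration $\scr D^{\otimes}/U \to \Span(\scr C/U, \all, \fet)$, consisting of those sections that send backwards (i.e.\ norm) arrows to cocartesian lifts. Since this description is functorial in $U$ via the restriction maps built into the presentably normed structure on $\scr D$, we get a presheaf $U \mapsto \NAlg_{\scr C}(\scr D(U))$ of $\infty$-categories on $\Sch_S$, and the task is to show that this presheaf converts Zariski (resp.\ Nisnevich) covers into limits.

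Fix a Zariski (resp.\ Nisnevich) cover $\{U_i \to U\}$ with \v{C}ech nerve $U_\bullet$. First, I would verify that forming the span category $\Span(\scr C/\ph, \all, \fet)$ and the cocartesian fibration $\scr D^{\otimes}/\ph$ commutes with restriction along the maps $U_i \to U$; this is essentially the content of the presentably normed structure, and reduces the problem to showing that the presheaf of section categories, together with the cocartesian condition on backwards arrows, satisfies descent. Second, I would observe that the category of sections of a cocartesian fibration is a limit (over the base) of the fiber categories, and that each fiber over an object $Z \xleftarrow{f} Y \xrightarrow{p} X \in \Span(\scr C/U,\all,\fet)$ is (a product of copies of) $\scr D(X)$. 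By descent for $\scr D$, each such fiber category satisfies Zariski (resp.\ Nisnevich) descent, and since limits commute with limits, the section category does too.

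The remaining point is that the full subcategory of \emph{normed} sections (those sending backwards arrows to cocartesian lifts) is also stable under this descent: being cocartesian is a pointwise condition on arrows, and a limit of cocartesian arrows in a limit of cocartesian fibrations is cocartesian. Hence the inclusion of normed sections into all sections is a pullback along a sheaf-valued condition, and therefore descent is inherited from the ambient section category.

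The main obstacle is bookkeeping rather than mathematical content: one has to carefully match the definition of $\NAlg_{\scr C}(\scr D(U))$ as a functor in $U$ (i.e.\ identify the base change functors between the section categories as those induced by the presentably normed structure on $\scr D$) and then invoke the general principle that limits of $\infty$-categories preserve both functor categories and full subcategories cut out by pointwise conditions. Once this is set up, the descent statement for $\NAlg_{\scr C}(\scr D(-))$ is a formal consequence of descent for $\scr D$.
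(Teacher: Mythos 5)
There is a genuine gap in your first two steps: the ambient presheaf of section categories $U \mapsto \Sect(\Span(\scr C_{/U}, \all, \fet), \scr D)$ does \emph{not} satisfy descent, so you cannot first prove descent for it and then observe that the normed condition is preserved. The problem is that the indexing category $\Span(\scr C_{/U},\all,\fet)$ varies with $U$, and an object $X \to U$ of $\scr C_{/U}$ need not factor through any member of the cover; it is therefore not true that the section category over $U$ is a limit of the section categories over the $U_n$ "because limits commute with limits". Concretely, for a disjoint Zariski cover $U = U_1 \sqcup U_2$ a general section $E$ over $U$ may assign to $X = X_1 \sqcup X_2$ a value $E(X) \in \scr D(X) \wequi \scr D(X_1) \times \scr D(X_2)$ that has nothing to do with $(E(X_1), E(X_2))$, so restriction to the cover is not fully faithful on all sections. (Relatedly, the category of \emph{all} sections of a cocartesian fibration is a lax limit of the fibers, not a limit; only the cocartesian sections form the limit, and normed sections are only required to be cocartesian over backward arrows.)

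The cartesianness over backward morphisms is thus not a "pointwise condition inherited from the ambient descent"; it is the ingredient that makes descent true in the first place. The paper's argument constructs an adjunction $a^*: F(S) \rightleftarrows \lim_\Delta F(U_\bullet) : a_*$ on the full section categories, with $(a_* E_\bullet)(X) \wequi \lim_{n} E_n(X \times_S U_n)$, and then checks that the unit and counit are equivalences \emph{only after restricting to the normed subcategories}: for a normed section one has $E(X \times_S U_n) \wequi E(X)|_{X \times_S U_n}$, so the unit map $E(X) \to \lim_n E(X \times_S U_n)$ is precisely the descent comparison for $\scr D$ along the induced cover $\{X \times_S U_n \to X\}$. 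You should restructure your argument along these lines: set up the adjunction on section categories, and use the cocartesian-over-backward-arrows condition together with descent for $\scr D$ to verify the triangle of equivalences on the normed subcategories.
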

\begin{proof}
Consider the presheaf of categories on $\Et_S$ \[ F: U \mapsto \Sect(\Span(\scr C_{/U}, \all, \fet), \scr D). \]
Let $U \to S$ be a covering.
We obtain an adjunction\todo{details?} \[ a^*: F(S) \adj \lim_\Delta F(U_\bullet): a_*. \]
The functors are informally described as \[ (a^* E)_n(X \to U_n) \wequi E(X \to U_n \to S) \] and \[ (a_* E_\bullet)(X) \wequi \lim_{n \in \Delta} E_n(X \times_S U_n). \]
By construction $\NAlg_{\scr C}(\scr D(U))$ embeds into $F(U)$.
It is enough to prove that if $E \in \NAlg_{\scr C}(\scr D(S)) \subset F(S)$ and \[ F_\bullet \in \lim_\Delta \NAlg_{\scr C}(\scr D(U_\bullet)) \subset \lim_\Delta F(U_\bullet) \] then $a_*a^* E \wequi E$ and $a^*a_* F_\bullet \wequi F_\bullet$.
Since $E$ is cartesian over backwards morphisms, we get $E(X \times U_n) \wequi E(X)|_{X \times U_n}$, and so on.
Using this, the desired equivalences follow (via the explicit description of $a^*, a_*$) from the assumption that $\scr D$ satisfies descent.
\end{proof}

\begin{corollary} \label{corr:norms-extension}
Let $S$ be a scheme and $\scr D$ a presentably normed $\infty$-category over $S$, satisfying Zariski-descent. Then $\NAlg_{\Sm_S}(\scr D) \wequi \NAlg_{\Sch_S}(\scr D)$.
\end{corollary}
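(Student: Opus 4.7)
The plan is to apply Proposition \ref{prop:norms-extension-abstract} to the inclusion $\Sm_S \subset_\fet \Sch_S$, which reduces the corollary to verifying that for every $Z \in \Sm_S$ the presheaf $Q(\FEt, Z) \in \PSh(\Sch_S)$ is left Kan extended from its restriction to $\Sm_S$.

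To establish the Kan extension property, I would represent $Q(\FEt, Z)$ as a colimit of presheaves represented by smooth $S$-schemes. An object of $Q(\FEt, Z)(X)$ is a span $Z \xleftarrow{f} Y \xrightarrow{p} X$ with $p$ finite étale; by adjunction, the datum of $f$ is the same as a section $X \to R_{Y/X}(Z \times_S Y)$, where $R_{Y/X}$ denotes Weil restriction along $p$. Stratifying by the degree of $p$ yields a decomposition $Q(\FEt, Z) \simeq \bigsqcup_{n \geq 0} \scr M_n(Z)$, where $\scr M_n(Z)$ is the moduli stack of degree-$n$ finite étale covers equipped with a map to $Z$. Since Weil restriction along finite étale morphisms preserves smoothness, and $Z$ is smooth over $S$, each $\scr M_n(Z)$ is a smooth Deligne--Mumford stack over $S$. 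Choosing an étale atlas $U_n \to \scr M_n(Z)$ by a smooth $S$-scheme and forming its Čech nerve presents $\scr M_n(Z)$ as a small colimit of representables by smooth $S$-schemes in $\PSh(\Sch_S)$. Since representable presheaves of smooth $S$-schemes, viewed on $\Sch_S$, are their own left Kan extension from $\Sm_S$ (by Yoneda and full faithfulness of the inclusion), and left Kan extension commutes with colimits, one concludes that $Q(\FEt, Z)$ is left Kan extended from $\Sm_S$.

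The Zariski descent hypothesis on $\scr D$ ensures via Lemma \ref{lemm:NAlg-sheaf} that both $\NAlg_{\Sm_S}(\scr D(\ph))$ and $\NAlg_{\Sch_S}(\scr D(\ph))$ are Zariski sheaves, which in particular permits a reduction to the affine setting. An alternative route, should the direct verification above prove awkward, is to apply Proposition \ref{prop:norms-extension-abstract} in stages through intermediate categories $\SmAff_S \subset \Aff_S \subset \Sch_S$, with Zariski descent handling the passage between $\SmAff_S$ and $\Sm_S$ and between $\Aff_S$ and $\Sch_S$. The main technical obstacle is the stacky step: proving that $\scr M_n(Z)$ is genuinely a smooth Deligne--Mumford stack over $S$ and presenting it as a colimit of smooth $S$-schemes in the $\infty$-categorical framework of $\PSh(\Sch_S)$. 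This is standard once an étale atlas is in hand, but requires care regarding the automorphisms of finite étale covers and the interplay between the stack's smoothness and the smoothness of the atlas.
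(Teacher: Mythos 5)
Your overall strategy---reduce to Proposition \ref{prop:norms-extension-abstract} and verify that $Q(\FEt,Z)$ is left Kan extended from smooth schemes---is the right one, and your observation that $i_!$ preserves representables and colimits is correct. The gap is in the colimit presentation itself. The geometric realization of the \v{C}ech nerve of an \'etale atlas $U_n \to \scr M_n(Z)$ recovers $\scr M_n(Z)$ only after sheafification in a topology for which the atlas admits local sections; computed in $\PSh(\Sch_S)$, as Proposition \ref{prop:norms-extension-abstract} requires, the colimit at a test scheme $X$ only sees those finite \'etale covers $Y \to X$ (with map to $Z$) that lift \emph{globally} to the atlas, not merely \'etale-locally on $X$. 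So $Q(\FEt,Z)$ is not the presheaf colimit of your \v{C}ech diagram, and the conclusion that it is left Kan extended from $\Sm_S$ does not follow. This is not a technicality one can wave away by "choosing the atlas carefully": for a general $X \in \Sch_S$ a finite locally free $\scr O_X$-algebra need not be globally generated, so no rigidified (schematic) moduli of covers-with-chosen-generators can surject onto $Q(\FEt,Z)(X)$ at the presheaf level.

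This is exactly why the affine reduction, which you mention only as an optional "alternative route", is in fact the essential first step. The paper uses Lemma \ref{lemm:NAlg-sheaf} (Zariski descent for $\NAlg$) to reduce to $S$ affine, then \cite[Proposition 7.6(5)]{norms} to replace $\Sm_S \subset \Sch_S$ by $\SmAff_S \subset \Aff_S$, and only then applies Proposition \ref{prop:norms-extension-abstract}; the required Kan-extension statement for $Q(\FEt,Z)$ on \emph{affine} schemes is Corollary \ref{corr:FEt-base-change} of the prequel. Over an affine $X$ every finite \'etale algebra admits global module generators, so the rigidified moduli space (an honest smooth affine scheme, with the stacky automorphisms killed by the choice of generators) does surject onto $Q(\FEt,Z)(X)$ already in presheaves, and the colimit over the comma category of smooth affines under $X$ can be analyzed directly. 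If you want a self-contained argument you should prove that affine statement; the DM-stack/\v{C}ech route does not supply it.
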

\begin{proof}
Since $X \mapsto \scr D(X)$ is a sheaf in the Zariski topology, so are $X \mapsto \NAlg_{\Sm_X}(\scr D)$ and $X \mapsto \NAlg_{\Sm_X}(\scr D)$, by Lemma \ref{lemm:NAlg-sheaf}. Consequently we may assume that $S$ is affine. By \cite[Proposition 7.6(5)]{norms} we have $\NAlg_{\Sm_S}(\scr D) \wequi \NAlg_{\SmAff_S}(\scr D)$ and $\NAlg_{\Sch_S}(\scr D) \wequi \NAlg_{\Aff_S}(\scr D)$. Thus we may replace $\Sm_S$ by $\SmAff_S$ and $\Sch_S$ by $\Aff_S$ in the statement. The result now follows from Proposition \ref{prop:norms-extension-abstract} and Corollary \ref{corr:FEt-base-change}.
\end{proof}

\begin{corollary}
Let $f: S' \to S$ be a morphism of schemes and $\scr D$ a presentably normed $\infty$-category over $S$ satisfying Zariski descent. Then the functor $f^*: \scr D(S) \to \scr D(S')$ lifts to a colimit-preserving functor $f^*: \NAlg_{\Sm_S}(\scr D) \to \NAlg_{\Sm_{S'}}(\scr D)$ in the sense that the following diagram commutes
\[
\begin{tikzcd}
\NAlg_{\Sm_S}(\scr D) \ar{r}{f^*} \ar{d} &  \NAlg_{\Sm_{S'}}(\scr D) \ar{d}\\
\scr D(S) \ar{r}{f^*} & \scr D(S'),
\end{tikzcd}
\]
where the vertical arrows are forgetful functors.

Furthermore, the top arrow has right adjoint $f_N$, and if $\scr D$ satisfies pro-smooth and proper base change, then $f^*: \NAlg_{\Sm_S}(\scr D) \adj \NAlg_{\Sm_{S'}}(\scr D): f_N$ lifts $f^*: \scr D(S) \adj \scr D(S'): f_*$in the sense that
\begin{equation} \label{eq:fN-U-comm}
\begin{tikzcd}
\NAlg_{\Sm_S}(\scr D)  \ar{d} &  \NAlg_{\Sm_{S'}}(\scr D) \ar{d} \ar[swap]{l}{f_*}\\
\scr D(S) & \scr D(S')\ar[swap]{l}{f_*} ,
\end{tikzcd}
\end{equation}
commutes, where the vertical arrows are forgetful functors.
\end{corollary}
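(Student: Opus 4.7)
The plan is to reduce from $\Sm$ to $\Sch$ via Corollary \ref{corr:norms-extension}, realize $f^*$ on normed algebras as a restriction functor along an induced map of span categories, obtain $f_N$ via the adjoint functor theorem, and identify its underlying functor with $f_*$ using base change.

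By Corollary \ref{corr:norms-extension} we may replace $\Sm$ by $\Sch$ throughout. Post-composition with $f$ defines a finite-étale-preserving functor $g\colon \Sch_{S'} \to \Sch_S$, which extends to a functor of span categories $\Span(\Sch_{S'},\all,\fet) \to \Span(\Sch_S,\all,\fet)$. Restriction along this functor on the ambient section categories $\Sect(\Span(-,\all,\fet), \scr D)$ preserves the defining properties of a normed algebra (cartesianness over backwards morphisms and distributivity over forward morphisms, both being pointwise conditions) and therefore descends to $f^*\colon \NAlg_{\Sch_S}(\scr D) \to \NAlg_{\Sch_{S'}}(\scr D)$. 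Its compatibility with the forgetful functors, which are evaluation at the terminal objects $S$ and $S'$ of the respective sites, is then immediate from the cartesian property: $(f^*E)(S') = E(g(S')) \wequi f^* E(S)$.

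To produce $f_N$, the next step is to show that $f^*$ preserves all colimits. Colimits in $\NAlg$ are generated by sifted colimits together with coproducts. Sifted colimits reduce to colimits in $\scr D$ via sifted-monadicity of the forgetful functor, and these are preserved because $f^*$ on $\scr D$ is the underlying functor of a morphism of presentably normed $\infty$-categories (hence a left adjoint). Coproducts in $\NAlg$ are built out of the symmetric monoidal and norm structure on $\scr D$, both of which $f^*$ respects by construction. Since both source and target are presentable, the adjoint functor theorem then supplies $f_N$.

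The remaining task, and the chief obstacle, is the commutation \eqref{eq:fN-U-comm}. I would describe $f_N$ explicitly as the relative right Kan extension along $g$ on sections, corestricted to normed algebras. Evaluating $(f_N E')(S)$ then gives a limit over a comma category which, after using cartesianness and the distributive squares satisfied by $E'$, reorganises into a diagram assembled from pullback and norm functors applied to $U(E')$. The Beck--Chevalley equivalences of the form $f^* p_\otimes \wequi q_\otimes g^*$ supplied by pro-smooth and proper base change (one for each relevant pullback square of schemes) collapse this reorganised diagram down to $f_*\, U(E')$. The delicate point is matching the indexing diagrams against the available base change squares; once arranged correctly, the identification itself is formal.
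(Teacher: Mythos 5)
Your construction of $f^*$ (reduce to $\Sch$ via Corollary \ref{corr:norms-extension}, restrict along the induced functor of span categories), your argument that it preserves colimits (sifted colimits via monadicity, coproducts via smash products), and the appeal to the adjoint functor theorem for $f_N$ all match the paper's argument. The gap is in the last step, the identification of $f_N$ with $f_*$, which you yourself flag as the chief obstacle and then only sketch. Your plan is to compute $f_N$ directly as a relative right Kan extension and collapse the resulting limit using ``Beck--Chevalley equivalences supplied by pro-smooth and proper base change.'' But the squares that arise in that limit involve the morphism $f$ itself, and for a \emph{general} $f$ the needed exchange equivalences (in particular, the compatibility of $f_*$ with the norm functors $p_\otimes$ for finite étale $p$) are exactly what the hypotheses do \emph{not} directly provide: pro-smooth and proper base change only give them when $f$ is pro-smooth or proper. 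So the plan as stated does not go through for arbitrary $f$; the ``delicate point'' you defer is not merely bookkeeping but the substance of the claim.

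The missing idea is a reduction to cases where the hypotheses apply. The paper observes that the exchange transformation $U f_N \to f_* U$ is compatible with composition of morphisms, is an equivalence for $f$ smooth, and that by Zariski descent (Lemma \ref{lemm:NAlg-sheaf}) the problem is local on $S$ and $S'$, so one may assume both affine. A morphism of affine schemes factors as a closed immersion (proper) followed by a pro-smooth morphism (projection from an infinite-dimensional affine space), and for each of these two classes the identification $U f_N \wequi f_* U$ is already known (\cite[Theorem 8.2, Example 8.4]{norms}). Composing the two exchange equivalences finishes the proof. You should replace your direct Kan-extension computation with this localization-and-factorization argument, or else supply the general base change statements your computation needs (which would amount to reproving those cited results).
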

\begin{proof}
By Corollary \ref{corr:norms-extension}, we may replace $\Sm_S$ by $\Sch_S$ and $\Sm_{S'}$ by $\Sch_{S'}$. Then existence of the functor is \cite[Proposition 7.6(7)]{norms}. The forgetful functor $\NAlg_{\Sm_S}(\scr D) \to \scr D(S)$ is conservative and preserves sifted colimits and smash products, and similarly for $S'$. Since finite coproducts in $\NAlg_{\Sm_S}(\scr D)$ are given by finite smash products, and the functor $f^*: \scr D(S) \to \scr D(S')$ preserves colimits and smash products, we conclude that $f^*: \NAlg_{\Sm_S}(\scr D) \to \NAlg_{\Sm_{S'}}(\scr D)$ preserves sifted colimits and finite coproducts. It follows that it preserves all colimits \cite[Lemma 2.8]{norms}. Thus there is a right adjoint $f_N$, by \cite[Proposition 7.6(1)]{norms} and the adjoint functor theorem.

It remains to identify $f_N$, that is, we must show that the exchange transformation $UF_N \to f_*U$ (where $U$ denotes the forgetful functors) obtained from $Uf^* \wequi f^*U$ is an equivalence.
The formation of these exchange transformations is compatible with composition\NB{ref/details?}, and it is an equivalence for $f$ smooth \cite[Theorem 8.2]{norms}.
Using that $\scr D$ satisfies Zariski descent, this implies that the problem is local on $S$ and $S'$, so we may assume that both are affine.
The functor $f_N$ lifts $f_*$ if $f$ is proper or pro-smooth \cite[Theorem 8.2, Example 8.4]{norms}. Any morphism of affine schemes is a composite of a closed immersion (hence a proper morphism) and a pro-smooth morphism (projection from an infinite dimensional affine space); hence $f_N$ lifts $f_*$ in general.
\end{proof}

\begin{remark}
These assumptions are satisfied for example when $\scr D = \SH$.
\end{remark}

\begin{corollary} \label{corr:free-normed-base-change}
Under the same assumptions, the formation of free normed objects commutes with arbitrary base change.
\end{corollary}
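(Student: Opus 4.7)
The plan is to deduce this corollary formally from the previous one by passing to left adjoints. The previous corollary sets up two parallel adjunctions: on the level of the normed categories, $f^* : \NAlg_{\Sm_S}(\scr D) \adj \NAlg_{\Sm_{S'}}(\scr D) : f_N$, and on the level of the underlying categories, $f^* : \scr D(S) \adj \scr D(S') : f_*$. The free functor $\NSym_S$ is by definition the left adjoint of the forgetful functor $U_S : \NAlg_{\Sm_S}(\scr D) \to \scr D(S)$, and likewise $\NSym_{S'}$ is left adjoint to $U_{S'}$.

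The key input is the commutativity of diagram \eqref{eq:fN-U-comm}, which says $U_S \circ f_N \wequi f_* \circ U_{S'}$ as functors $\NAlg_{\Sm_{S'}}(\scr D) \to \scr D(S)$. This is a composition of right adjoints, so passing to the canonical mate gives an equivalence of left adjoints
\[ f^* \circ \NSym_S \wequi \NSym_{S'} \circ f^* \]
as functors $\scr D(S) \to \NAlg_{\Sm_{S'}}(\scr D)$. That is precisely the assertion that forming the free normed object commutes with $f^*$.

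The only thing to be careful about is that the equivalence of right adjoints comes from a natural transformation which is \emph{a priori} a Beck--Chevalley transformation, not a literal equality; but the equivalence of left adjoints we extract by mate correspondence is characterized by the property that, after applying $U_{S'}$, it recovers the evident natural map $f^* U_S \NSym_S \to U_{S'} \NSym_{S'} f^*$ coming from the unit of the upstairs adjunction and the identification $U_{S'} f^* \wequi f^* U_S$. There is no subtlety here beyond unwinding the mate correspondence for the composition of the two adjunctions $f^* \dashv f_N$ and $\NSym_{S'} \dashv U_{S'}$ versus $\NSym_S \dashv U_S$ and $f^* \dashv f_*$. The hard work has all been done in the previous corollary, which used Zariski descent to reduce to the affine case and then factored $f$ as a closed immersion followed by a pro-smooth morphism to invoke \cite[Theorem 8.2, Example 8.4]{norms}; here one simply harvests the consequence.
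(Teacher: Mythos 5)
Your argument is correct and is exactly the paper's proof: the paper also obtains the statement by taking left adjoints in the commutative square \eqref{eq:fN-U-comm}, yielding $f^* \circ \NSym_S \wequi \NSym_{S'} \circ f^*$. Your additional remarks on the mate correspondence are a faithful unwinding of the same one-line argument, not a different route.
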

\begin{proof}
Take left adjoints in the commutative square \eqref{eq:fN-U-comm}.
\end{proof}
\end{subappendices}

\bibliographystyle{amsalpha}
\bibliography{powerops}

\end{document}